\documentclass[reqno]{amsart}
\usepackage{amsfonts}
\usepackage{mathrsfs}
\usepackage{hyperref}
\usepackage{amssymb}
\usepackage{CJK}
 \newtheorem{thm}{Theorem}[section]
 \newtheorem{cor}[thm]{Corollary}
 \newtheorem{prop}[thm]{Corollary}
  \newtheorem{prob}[thm]{Problem}
 
 \newtheorem{conj}[thm]{Conjecture}
 \newtheorem{lem}[thm]{Lemma}

 \theoremstyle{remark}
 
 \numberwithin{equation}{section}
\DeclareMathOperator{\sign}{sign}

\newcommand{\de}{\textup{deg}}

\newcommand{\f}{\mathbb{F}_q}

\begin{document}
\title{Distance Distribution in Reed-Solomon Codes}

\author{Jiyou Li}
\address{Department of Mathematics, Shanghai Jiao Tong University, Shanghai, P.R. China}
\email{lijiyou@sjtu.edu.cn}

\author{Daqing Wan}
\address{Department of Mathematics, University of California, Irvine, CA 92697-3875, USA}
\email{dwan@math.uci.edu}
%



\begin{abstract}
Let $\f$ be the finite field of $q$ elements.
In this paper we obtain bounds on the following counting problem: given a polynomial $f(x)\in \f[x]$ of degree $k+m$ and a non-negative integer $r$, count the number of polynomials $g(x)\in \f[x]$ of degree at most $k-1$ such that $f(x)+g(x)$ has exactly $r$ roots in $\f$.
Previously, explicit formulas were known only for the cases $m=0, 1, 2$.
As an application, we obtain an asymptotic formula  on the list size of  the standard Reed-Solomon code $[q, k, q-k+1]_q$.
%

%
%
%

\end{abstract}

\maketitle \numberwithin{equation}{section}
\newtheorem{theorem}{Theorem}[section]
\newtheorem{lemma}[theorem]{Lemma}
\newtheorem{example}[theorem]{Example}
\allowdisplaybreaks

\section{Introduction}

\subsection{Motivations}

This paper is motivated by  the following fundamental coding theory problem:

\begin{prob}
 Let $\mathcal{C}$ be a linear code over $\f$. Given a received word  $u$,  determine the distance distribution having $u$ as the center.
That is, for integer $i\geq 0$, compute the number $N_i(u)$ of codewords in $\mathcal{C}$ whose distance to $u$ is exactly $i$.
\end{prob}

When the received word $u$ is a codeword, this is the classical weight
distribution problem, which is generally {\bf NP}-hard and only well understood for certain special codes such as MDS codes and some special families of cyclic codes. When the received word is not a codeword, it is equivalent to the coset weight distribution problem. The coset weight
distribution was determined for a few very special classes of linear codes including $t$-error-correcting BCH codes for $t\leq 3$  (cf. \cite{C1, CHZ, CZ}), external  self-dual binary codes of length $n$ for $n\leq 20, n=28, 40, 46, 56$ (cf. \cite{Ha, HN, O2, O1}) and the second-order {R}eed-{M}uller  code of length 64 (cf. \cite{BA, OW}).

The distance distribution problem can be viewed as the counting version of list decoding and is much harder and widely open even for standard Reed-Solomon codes.    In this paper, we make the first attempt to study this problem and obtain an asymptotic formula for standard Reed-Solomon codes.

A special case of our problem is computing the
error distance from a received word $u$, that is, finding the smallest non-negative integer $i$ such that $N_i(u)>0$.
This can be reduced to the decision version of the maximal likelihood decoding problem in coding theory.   As Reed Solomon codes are
constructed using polynomials, all such problems on Reed-Solomon codes can be reduced to polynomial factorization problems.
Details will be explained in Section 2. To be more precise in this introduction, we now introduce some notations.


Let $\f$ be the finite field of $q$ elements with characteristic $p$.
Let $1\leq n\leq q$ be a positive integer,
$D=\{x_1,\ldots, x_n\} \subset \f$ be a subset of
cardinality $|D|=n>0$. For $1\leq k\leq n$, the Reed-Solomon code
$\mathcal{RS}_{n,k}$ has the codewords of the form
$$(f(x_1), \ldots, f(x_n))\in \f^n,$$
where $f$ runs over all polynomials in $\f[x]$ of degree at
most $k-1$. It is well-known that the minimum distance of the Reed-Solomon code is $n-k+1$.
If $D=\f$ (or $\f^*)$, then the code $\mathcal{RS}_{q,k}$($\mathcal{RS}_{q-1,k}$) is called the standard (respectively the primitive)
Reed-Solomon codes. All our results for standard Reed-Solomon codes extend to
primitive Reed-Solomon codes with minor modification. For this reason, we shall focus on the standard
Reed-Solomon codes in this paper.

For  any word $u=(u_1,u_2,\ldots,u_{n})\in \f^n$,  one can efficiently
compute a unique polynomial $u(x)\in \f[x]$ of degree
at most $n-1$ such that
$$u(x_i)=u_i,  \ { \rm for ~ all} ~1\leq i\leq n.$$Explicitly, the
polynomial $u(x)$ is given by the Lagrange interpolation formula
$$u(x) = \sum_{i=1}^n u_i \frac{\prod_{j\not=i}(x-x_j)}{\prod_{j\not=i}(x_i-x_j)}.$$
The degree  $\de(u)$ of $u$ is then defined as the degree of the associated polynomial $u(x)$. It is easy to see that $u$ is a codeword if and only if $\de(u)<k.$

For a given word $u\in \f^n$,  the distance from $u$ to $\mathcal{RS}_{n,k}$ is defined by
$$d(u, \mathcal{RS}_{n,k}): = \min_{v\in \mathcal{RS}_{n,k}}d(u,v).$$
The maximum likelihood decoding of $u$ is to find a codeword $v\in
\mathcal{RS}_{n,k}$ such that $d(u,v) = d(u, \mathcal{RS}_{n,k})$. Thus, computing
$d(u, \mathcal{RS}_{n,k})$ is essentially the decision version for the maximum likelihood decoding problem, which is ${\bf NP}$-complete for
general subset $D\subset \f$, see Guruswami-Vardy \cite{GV} and Cheng-Murray \cite{CM}. For standard Reed-Solomon code
with $D=\f$, the complexity of the maximum
likelihood decoding is unknown to be {\bf NP}-complete. This is an
important open problem. It was shown by Cheng-Wan
\cite{CW1,CW2} that decoding the standard Reed-Solomon code is at least as hard as the discrete logarithm
problem in a large extension of the finite field $\f$.

If $\de(u)\leq k-1$,  then $u$ is a codeword and thus
$d(u, \mathcal{RS}_{n,k})=0$. We shall assume that $k \leq \de(u)\leq n-1$. The
following simple result gives an elementary bound for $d(u,
\mathcal{RS}_{n,k})$.

\begin{thm}\cite{LW1} Let $u\in {\bf F}_q^n$ be a word such that $k \leq \de(u)\leq n-1$.
Then,
$$n-\de(u) \leq d(u, \mathcal{RS}_{n,k}) \leq n-k.$$
\end{thm}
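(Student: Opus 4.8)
The plan is to translate Hamming distance into a statement about the number of roots of a polynomial and then invoke the fact that a nonzero polynomial of degree $d$ has at most $d$ zeros. First I would record the basic dictionary. If $v\in\mathcal{RS}_{n,k}$ is the codeword associated to a polynomial $f\in\f[x]$ with $\de(f)\leq k-1$, then $v_i=f(x_i)$ while $u_i=u(x_i)$, so $d(u,v)$ equals $n$ minus the number of points $x_i\in D$ at which $(u-f)(x_i)=0$. Both inequalities therefore reduce to controlling how many roots in $D$ the difference $u(x)-f(x)$ can have as $f$ ranges over polynomials of degree at most $k-1$.

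For the lower bound I would argue as follows. Since we assume $\de(u)\geq k$ while $\de(f)\leq k-1$, the difference $u(x)-f(x)$ is a nonzero polynomial of degree exactly $\de(u)$. Such a polynomial has at most $\de(u)$ roots in $\f$, hence at most $\de(u)$ roots among $x_1,\dots,x_n$. Thus $u$ and $v$ agree in at most $\de(u)$ coordinates, so $d(u,v)\geq n-\de(u)$. As this holds for every codeword $v$, taking the minimum over $\mathcal{RS}_{n,k}$ yields $d(u,\mathcal{RS}_{n,k})\geq n-\de(u)$.

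For the upper bound I would exhibit a single good codeword. Choose any $k$ of the evaluation points, say $x_1,\dots,x_k$, and let $f$ be the unique polynomial of degree at most $k-1$ with $f(x_i)=u_i=u(x_i)$ for $1\leq i\leq k$, obtained by Lagrange interpolation through those $k$ points. Then $u(x)-f(x)$ vanishes at the $k$ distinct points $x_1,\dots,x_k$, so the associated codeword $v$ agrees with $u$ in at least $k$ coordinates, giving $d(u,v)\leq n-k$. Since $d(u,\mathcal{RS}_{n,k})\leq d(u,v)$, the upper bound follows.

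The argument is entirely elementary, so I do not anticipate a genuine obstacle; the one point that needs care is the degree bookkeeping in the lower bound. There one must use the hypothesis $\de(u)\geq k$ to guarantee that $u(x)-f(x)$ does not drop in degree, and in particular stays nonzero, for \emph{every} admissible $f$. This is exactly where the assumption $k\leq\de(u)$ is essential: without it the difference could vanish identically (when $u$ is itself a codeword) and the root count would collapse.
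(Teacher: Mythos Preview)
Your argument is correct. The paper does not actually supply a proof of this theorem; it simply cites \cite{LW1} and moves on, so there is no ``paper's own proof'' to compare against. Your approach---translating $d(u,v)$ into $n$ minus the number of roots of $u(x)-f(x)$ in $D$, then bounding that root count above by $\de(u)$ for the lower bound and exhibiting an interpolating $f$ through $k$ evaluation points for the upper bound---is the standard elementary proof and is exactly what one would expect the cited reference to contain.
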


The word $u$ is called a deep hole if $d(u, \mathcal{RS}_{n,k})=n-k$, that is, it
achieves the covering radius. When $\de(u)=k$, the upper bound and the lower bound agree and hence $u$ is a deep hole. This
gives $(q-1)q^k$ deep holes. For a general Reed-Solomon code
$\mathcal{RS}_{n,k}$, it is already difficult to determine if a given word $u$ is a deep hole. Even for the special case that $\de(u)=k+1$, the deep hole problem is equivalent to the $(k+1)$-subset sum problem over $\f$
which is {\bf NP}-complete \cite{CM}.

For the standard Reed-Solomon code, that is, $D=\f$ and
thus $n=q$, there is the following deep hole conjecture of
Cheng-Murray \cite{CM}.
 \begin{conj}
For the code $\mathcal{RS}_{q, k}$ with $p>2$,  the set $\{u\in \f^n \big| \de(u)=k\}$
gives the set of all deep holes.
\end{conj}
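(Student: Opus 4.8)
Since the cited Theorem already shows that every word of degree exactly $k$ is a deep hole, the remaining content is the converse: I would try to show that no word $u$ with $k+1\le\de(u)\le q-1$ is a deep hole. Writing $f(x)=u(x)$ and $\de(u)=k+m$ with $m\ge 1$, the word $u$ fails to be a deep hole precisely when $d(u,\mathcal{RS}_{q,k})\le q-k-1$, i.e.\ when some codeword agrees with $u$ in at least $k+1$ coordinates. In polynomial terms this means there is a $g\in\f[x]$ with $\de(g)\le k-1$ such that $f(x)+g(x)$ has at least $k+1$ roots in $\f$ --- exactly the $r\ge k+1$ instance of the counting problem of the abstract. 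So the plan is: for every $f$ of degree $k+m$ with $m\ge 1$, produce at least one admissible $g$, for instance by showing that the count $N_{k+1}(f)$ of such $g$ for which $f+g$ has exactly $k+1$ roots is positive.

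The first step is to make the existence condition intrinsic. Fix $k+1$ distinct points $a_1,\dots,a_{k+1}\in\f$. A polynomial $g$ of degree $\le k-1$ with $g(a_i)=-f(a_i)$ for all $i$ exists if and only if the interpolation obstruction vanishes, i.e.\ the $k$-th divided difference $f[a_1,\dots,a_{k+1}]=0$. For $f$ of degree $k+m$ this divided difference is a symmetric polynomial of degree $m$ in the $a_i$, whose top part is $b_{k+m}\,h_m(a_1,\dots,a_{k+1})$, where $h_m$ is the complete homogeneous symmetric polynomial and $b_{k+m}$ is the leading coefficient of $f$. Thus the conjecture reduces to the geometric assertion that the affine hypersurface $\{f[a_1,\dots,a_{k+1}]=0\}$ always carries an $\f$-point with pairwise distinct coordinates. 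For $m=1$ this is the $(k+1)$-subset sum condition already noted in the text; the genuinely new feature for general $m$ is the higher-degree symmetric equation, which is where the counting machinery of this paper is needed.

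The second step is to count these distinct-coordinate solutions. I would expand the indicator of the equation into additive characters and apply the distinct-coordinates sieve, isolating a main term of order $\binom{q}{k+1}/q$ (from the trivial character) against an error controlled by Weil-type bounds for the nontrivial characters --- precisely the estimate packaged in the paper's main counting theorem. Whenever the error is dominated by the main term this forces $N_{k+1}(f)>0$, and hence $u$ is not a deep hole. This should succeed in the asymptotic regime where $q$ is large compared with $k$ and $m$, which is the source of the asymptotic formula advertised in the abstract.

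The hard part --- and the reason the statement must remain a conjecture --- is uniformity. The character-sum error depends on $f$ through the geometry of the hypersurface, and degenerate $f$ (for which the associated symmetric equation factors or becomes singular) can inflate the error, so one needs a careful stratification over the coefficients $b_k,\dots,b_{k+m}$. More seriously, the main term $\binom{q}{k+1}/q$ is comfortably larger than the error only when the code rate is bounded away from $1$; for $k$ close to $q$ (equivalently $q-k$ small) the few admissible codewords can vanish into the error term, and exactly these high-rate instances are where counterexamples in characteristic $2$ are known to live. A complete proof would therefore have to supplement the counting estimate with a genuinely different argument in the high-rate range, and to locate precisely where the hypothesis $p>2$ enters the non-degeneracy of the relevant curves; I expect this last point to be the principal obstacle.
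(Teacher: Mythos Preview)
The statement is labeled a \emph{Conjecture} in the paper, and the paper does not prove it: it is the open Cheng--Murray deep hole conjecture, mentioned as motivation, with only partial results cited (\cite{CMP12}, \cite{Kai}, \cite{Liao}, \cite{ZCL}). There is therefore no ``paper's own proof'' to compare against. Your write-up is not a proof either, and you say so yourself in the final paragraph; what you have produced is a plausible strategy sketch together with an honest diagnosis of why it cannot be pushed through in full generality.

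On the strategy itself: your reformulation is correct and is essentially the one underlying Lemma~\ref{lem1.1}. The condition that some $g$ of degree $\le k-1$ makes $f+g$ vanish on a prescribed $(k+1)$-set $\{a_1,\dots,a_{k+1}\}$ is exactly the vanishing of the divided difference, and counting such sets is precisely the quantity $M(f,k+1)$ of Lemma~\ref{lem1.1}. The paper's estimate for $M(f,k+1)$ gives a main term $\binom{q}{k+1}q^{-1}$ against an error of size roughly $\binom{q/p + m\sqrt{q}+k+1}{k+1}\binom{m-1}{m-1}\sqrt{q}^{\,m-1}$, and positivity of $M(f,k+1)$ follows only in the parameter ranges where the main term dominates --- which is exactly the limitation you identify. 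So your sketch aligns with what the paper's machinery can actually deliver, and your assessment that the high-rate regime (large $k/q$) and the degeneracy stratification are the genuine obstacles is accurate. But none of this constitutes a proof of the conjecture, and neither the paper nor your proposal claims otherwise.
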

 Many results were proved towards this conjecture. Please refer to \cite{CMP12}, \cite{Kai}, \cite{Liao}, \cite{ZCL} and the references there.

The deep hole problem is to determine when the upper bound in the
above theorem agrees with $d(u, \mathcal{RS}_{n,k})$. One is also interested in the situations
when the lower bound $n-\de(u)$ agrees with $d(u, \mathcal{RS}_{n,k})$. 
We call
$u$ {\bf ordinary} if $d(u, \mathcal{RS}_{n,k})=n-\de(u)$. A basic problem is
then to determine when a given word $u$ is ordinary. This is equivalent to determining if $N_{n-\de(u)}(u)>0$.
This problem will be studied in a future paper.
%

Since $k \leq \de(u) \leq n-1$, we can write $\de(u) = k+m$ for some non-negative integer $m \leq n-k-1$.
Then, the word $u$ is represented uniquely by
a polynomial $u(x)\in \f[x]$ of degree $k+m$.
For $0\leq r\leq k+m$, let $N_D(f(x), r)$ denote the number of  polynomials $g(x)\in\f[x]$  with  $\deg g(x) \leq k-1$ such that
$ f(x)+g(x)$ has exactly $r$ distinct roots in $D$.
It is clear that $N_i(u) = N_D(u(x), n-i)$. Thus, it is enough to study $N_D(f(x), r)$.

From now on, we only work with the standard Reed-Solomon codes $\mathcal{RS}_{q,k}$. Since $D=\f$, we can write $N(f(x), r) = N_{\f}(f(x), r)$
and $N_i(u) = N(u(x), q-i)$. It is clear that without loss of generality, we can assume that $f(x)$ is monic with no terms of degree less than $k$.
Our distance distribution problem for the standard Reed-Solomon code is reduced to the following number theoretic problem.

\begin{prob} Let $1\leq k \leq q$ and $-k \leq m\leq q-k-1$.
Given a monic polynomial $f(x)\in \f [x]$ of degree $k+m$ and an integer $0\leq r\leq k+m$, count $N(f(x), r)$,  the number  of polynomials $g(x)\in\f[x]$  with  $\deg g(x) \leq k-1$ such that $f(x)+g(x)$ has exactly $r$ distinct roots in $\f$.
\end{prob}
%
%

Not much is  known about this problem. Elementary explicit formulas for $m\leq 2$ were known before.
Exponential lower bounds and asymptotic formula for $N(f(x), r)$ have been studied in \cite{CW0}, \cite{CW1}, \cite{CW2}, \cite{LW2} in the extreme case $r=m+k$.
Our contribution of this paper is to prove results for all $0\leq r\leq k+m$.
If $k$ is very small (say logarithmic in $q$),  one can use the Chebotarev density theorem to derive a
good asymptotic formula. However, in coding theory application, $k$ is the code dimension which can be as large as a linear function of $q$. The problem then becomes more difficult.  The main purpose of this paper is to prove nontrivial results for large $k$ and a wide range of $r$ if $m$ is not too large.

\subsection{Known Cases for $m\leq 2$}

When $m<0$, $f(x)$ represents a codeword and thus we may assume $f\equiv 0$,  or equivalently $u=0$.  By a famous theorem of Mac Williams,
 for $0\leq r\leq k-1$ we have
    \begin{align*}
 N(0, r)
 ={q \choose r}q^{k-r-1}(q-1)\left(\sum_{j=0}^{k-r-1}(-1)^j {q-r-1 \choose j}q^{-j}\right).
    \end{align*}

If {\bf $m=0$}, then $\text{deg}(f)=k$. In this case, $u$ is a deep hole. An explicit formula for  $N(x^k, r)$ was given by A. Knopfmacher and J. Knopfmacher \cite{KK}.
%

 If {\bf $m=1$}, then $\text{deg}(f)=k+1$. We may assume $f(x)=x^{k+1}+ax^k$.  It turns out that $N(x^{k+1}+ax^k, r)$ depends on $a$. An explicit formula
 for $N(x^{k+1}+ax^k, r)$ was given by Zhou, Wang and Wang \cite{ZWW}.
A  more complicated explicit counting formula for the case {\bf $m=2$} is also given in the same paper.

When $m>2$, it is no longer reasonable to expect an explicit formula for $N(f(x), r)$,
but we can hope for an asymptotic formula. This is the aim of the present paper.

\subsection{Main Result}

For an integer $s\geq 0$, define the alternating sum
$$\mu_s=\displaystyle{\sum_{j=0}^{s} (-1)^{j}{q-r \choose j} q^{-j}} = 1 - \frac{q-r}{q} + {q-r\choose 2}\frac{1}{q^2}-\cdots.$$
The absolute value of the $j$-th term is decreasing in $j$. It follows that if $r =cq$ for some constant $0<c<1$, then
   $$0< c \leq \frac{r}{q} = 1 - \frac{q-r}{q} \leq \mu_{s} \leq 1.$$
    Since $\mu_s$ is a truncation of $(1-q^{-1})^{q-r}$, $\mu_{s}$ is close to $(1/e)^{1-c}$ when $q$ and $s$ are both large, and $r\approx cq$.
Our main result is the following bound on $N(f(x), r)$,  which holds for all $k, m$, and $r$.

\begin{thm}\label{Theorem1.5}
Let $f(x) \in \f [x]$ be a polynomial of $\text{deg}(f)=k+m\leq q-1$.
For all integers $0\leq r\leq k+m$, we have
\begin{align*}
\left|N(f(x), r)-\mu_{k+m-r}{q \choose r}q^{k-r}
\right|\leq \sum_{j=k+1}^{k+m}  {j \choose r}{\frac {q}p+m\sqrt{q}+j \choose j}{m-1 \choose k+m-j} \sqrt{q}^{k+m-j}.
 \end{align*}
\end{thm}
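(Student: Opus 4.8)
The plan is to attack $N(f(x),r)$ by an inclusion--exclusion sieve on the root set, reducing everything to a single family of auxiliary counts and then estimating the tail of that family by exponential sums. Write $Z(F)$ for the number of distinct roots of $F$ in $\f$, and for $0\le s\le k+m$ set
\[
A_s=\sum_{\de(g)\le k-1}\binom{Z(f+g)}{s}=\sum_{\substack{S\subseteq\f\\ |S|=s}}E_S,\qquad E_S:=\#\{g:\de(g)\le k-1,\ (f+g)|_S\equiv 0\},
\]
the second equality coming from counting pairs $(S,g)$ with $S$ an $s$-subset of the roots of $f+g$. Since $\sum_{s=r}^{n}(-1)^{s-r}\binom{s}{r}\binom{n}{s}=\binom{n}{r}(1-1)^{n-r}$ equals $1$ if $n=r$ and vanishes otherwise, summing this identity with $n=Z(f+g)$ over all $g$ yields the exact sieve formula $N(f(x),r)=\sum_{s=r}^{k+m}(-1)^{s-r}\binom{s}{r}A_s$. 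Thus the entire problem is to understand the numbers $A_s$.

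The first point is that $A_s$ is given by an exact elementary formula in the stable range $s\le k$. Indeed $E_S$ counts polynomials $g$ of degree $<k$ with $g=-f$ at the $s$ distinct points of $S$; the evaluation map from the $k$-dimensional space of such $g$ to $\f^{S}$ is surjective when $s\le k$ (a Vandermonde matrix of full column rank), so every fibre has size $q^{k-s}$ and hence $A_s=\binom{q}{s}q^{k-s}$ for all $s\le k$, independently of $f$. Plugging the same expression $\binom{q}{s}q^{k-s}$ formally into the sieve for all $s\le k+m$ and using $\binom{s}{r}\binom{q}{s}=\binom{q}{r}\binom{q-r}{s-r}$ collapses the sum to $\binom{q}{r}q^{k-r}\mu_{k+m-r}$, exactly the main term of the theorem. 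Subtracting, the stable terms cancel and I am left with
\[
\Bigl|N(f(x),r)-\mu_{k+m-r}\binom{q}{r}q^{k-r}\Bigr|\le\sum_{s=k+1}^{k+m}\binom{s}{r}\,\bigl|A_s-\tbinom{q}{s}q^{k-s}\bigr|,
\]
so it suffices to bound each term $|A_s-\binom{q}{s}q^{k-s}|$ for $k<s\le k+m$ by $\binom{q/p+m\sqrt q+s}{s}\binom{m-1}{k+m-s}\sqrt q^{\,k+m-s}$.

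For $k<s\le k+m$ the elementary count breaks down, and I would reinterpret $A_s$ arithmetically. Writing $f+g=P_S\,h$ with $P_S=\prod_{a\in S}(x-a)$ and $h$ monic of degree $t:=k+m-s\le m-1$, the condition $\de(g)\le k-1$ forces the $m$ coefficients of $P_S h$ in degrees $k,\dots,k+m-1$ to equal those of $f$; for each monic $h$ of degree $t$ this determines (triangularly) the first $m$ elementary symmetric functions $e_1(S),\dots,e_m(S)$, so that
\[
A_s=\sum_{h\text{ monic},\ \de(h)=t}\ \#\{\,S:\ |S|=s,\ e_i(S)=c_i(h)\ (1\le i\le m)\,\},
\]
where, as $h$ ranges over its $q^{t}$ values, $(c_1(h),\dots,c_m(h))$ sweeps a $t$-dimensional affine family. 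Detecting the $m$ symmetric-function constraints by a nontrivial additive character $\psi$ of $\f$ and summing over the $h$-family, the trivial character reproduces the main term $\binom{q}{s}q^{k-s}$, while the remaining characters---after the $h$-summation annihilates $t$ of the $m$ frequency variables---are confined to an $(s-k)$-dimensional space $\Lambda$; a short bookkeeping of powers of $q$ (which cancel exactly) then gives
\[
\Bigl|A_s-\binom{q}{s}q^{k-s}\Bigr|\ \le\ \frac{1}{s!}\,\max_{\lambda\in\Lambda\setminus\{0\}}\bigl|\Sigma_{\lambda}\bigr|,\qquad \Sigma_{\lambda}=\sum_{(x_1,\dots,x_s)\text{ distinct}}\psi\Bigl(\textstyle\sum_{i=1}^m\lambda_i e_i(x_1,\dots,x_s)\Bigr).
\]

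The main obstacle is the uniform estimation of the distinct-coordinate exponential sum $\Sigma_{\lambda}$. My plan is to convert $\sum_i\lambda_i e_i$ into a separable weight $\sum_j G_{\lambda}(x_j)$ with $G_{\lambda}$ of degree $\le m$, apply the Li--Wan distinct-coordinate sieve to write $\Sigma_{\lambda}$ as a signed sum over set partitions of products of one-variable sums $\sum_{x\in\f}\psi(\ell\,G_{\lambda}(x))$, and then bound each such factor by Weil. The genuine difficulty is that the conversion from elementary symmetric functions to the power sums that Weil's bound requires is governed by Newton's identities, which degenerate exactly when $p\le m$ (the coefficient $i\,e_i$ vanishes when $p\mid i$). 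In the Frobenius-degenerate (Artin--Schreier) directions, where $\psi\circ G_{\lambda}$ is constant on the trace-zero hyperplane, this forces a characteristic-dependent contribution of size $q/p$, while the nondegenerate directions are controlled by Weil with saving $(m-1)\sqrt q\le m\sqrt q$. Packaging the two into a single per-coordinate bound $q/p+m\sqrt q$ and feeding it through the sieve produces the factor $\binom{q/p+m\sqrt q+s}{s}$, whereas the partition combinatorics---cut off by the degree $m$ and by the dimension $t$ of the quotient family---are expected to contribute $\binom{m-1}{k+m-s}$ together with the additional square-root saving $\sqrt q^{\,k+m-s}$ coming from the $t$-dimensional $h$-summation. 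Making this final estimate completely uniform in $p$, $k$, $m$ and $r$ is where essentially all the work lies.
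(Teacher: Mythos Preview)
Your outer framework is exactly the paper's: the weighted inclusion--exclusion
\[
N(f,r)=\sum_{s=r}^{k+m}(-1)^{s-r}\binom{s}{r}A_s,\qquad A_s=M(f,s),
\]
the Vandermonde computation $A_s=\binom{q}{s}q^{k-s}$ for $s\le k$, and the binomial collapse of the formal main term to $\mu_{k+m-r}\binom{q}{r}q^{k-r}$ are all correct and match the paper (Theorem~2.1 reducing to Lemma~2.1). So the whole problem is indeed the estimate
\[
\Bigl|A_s-\binom{q}{s}q^{k-s}\Bigr|\le \binom{q/p+m\sqrt q+s}{s}\binom{m-1}{k+m-s}\sqrt q^{\,k+m-s}\qquad(k<s\le k+m),
\]
and this is where your plan has a real gap.

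You propose to detect the constraints $e_i(S)=c_i(h)$ $(1\le i\le m)$ with \emph{additive} characters of $\f^m$, and then ``convert $\sum_i\lambda_i e_i$ into a separable weight $\sum_j G_\lambda(x_j)$'' before running the distinct-coordinate sieve. That conversion does not exist. Already $e_2(x_1,\dots,x_s)=\sum_{i<j}x_ix_j$ is not a sum of one-variable functions, and Newton's identities do not help: they express $e_i$ as a \emph{polynomial} (not a linear form) in the power sums $p_1,\dots,p_i$, so $\sum_i\lambda_i e_i$ is never of the form $\sum_j G_\lambda(x_j)$. Consequently, after the Li--Wan sieve the $X_\tau$-sums are not products of one-variable Weil sums, and your route to the factor $\binom{q/p+m\sqrt q+s}{s}$ breaks down. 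The discussion of Newton's identities and the $q/p$ term treats a secondary characteristic issue but does not address this structural obstruction.

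What the paper does (Section~5) is precisely designed to manufacture the separability you need: after the substitution $x\mapsto 1/x$ the condition becomes
\[
(1-x_1x)\cdots(1-x_sx)\,h(x)\equiv \tilde f(x)\pmod{x^{m+1}},
\]
which is a \emph{multiplicative} congruence in the unit group of $\f[x]/(x^{m+1})$. Detecting it by multiplicative characters $\chi$ of that group (trivial on $\f^*$) gives
\[
\chi\Bigl(\prod_i(1-x_ix)\Bigr)=\prod_i\chi(1-x_ix),
\]
which is genuinely separable in the $x_i$. Now the distinct-coordinate sieve over $\tau$ yields products of one-variable sums $\sum_{a\in\f}\chi^{\ell}(1-ax)$, and the Weil bound for the $L$-function of $\chi$ on $\f[x]/(x^{m+1})$ gives $(m-1)\sqrt q$ (or $q/p$ on the torsion part of the character group), assembling into $\binom{q/p+m\sqrt q+s}{s}$. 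The factor $\binom{m-1}{k+m-s}\sqrt q^{\,k+m-s}$ comes from the \emph{same} $L$-function bound applied to the $h$-sum $\sum_{h\in P_{d-s}}\chi(h)$, not from a linear-annihilation argument. Your additive-character plan cannot reproduce either piece because it never achieves the product structure in the $x_i$; replacing additive by multiplicative characters of $(\f[x]/(x^{m+1}))^*$ is the missing idea.
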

Our technique to establish Theorem \ref{Theorem1.5} is
based on a distinct coordinates sieving technique discovered by the authors \cite{LW2, LW3},  a weighted inclusion-exclusion sieving formula, and a character sum bound on constant degree polynomials defined over a suitable residue ring.

The number $\sqrt{q}$ in the error term comes from the application of the Riemann hypothesis over finite fields (Weil's bound).
The number of non-zero error terms in the error estimate is $k+m -\max \{k+1, r \}$. This means that if either $m$ is small
or $k+m-r$ is small, then there are only a few terms in the error estimate. The theorem also becomes stronger in the case $q=p$ is a prime since then the number $\frac {q}p $ becomes 1.
We now derive a few corollaries and explain how they are related to previous results.

 When  $m=0$, we may suppose $f(x)=x^k$. In this case, there is no error term in our asymptotic formula and  we thus obtain the following explicit formula
 first proved in \cite{KK}, as reported in the above known cases.
\begin{cor}
\begin{align*}
    N(x^k, r)={q \choose r}q^{k-r}\left(\sum_{j=0}^{k-r}(-1)^j {q-r \choose j}q^{-j}\right).
   \end{align*}
\end{cor}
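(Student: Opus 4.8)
The plan is to derive the corollary as the special case $m=0$ of Theorem \ref{Theorem1.5}. When $f(x)=x^k$ we have $\de(f)=k$, so $k+m=k$ and thus $m=0$. I would first substitute these values into the error-term bound on the right-hand side of Theorem \ref{Theorem1.5}. The key observation is that the summation index runs from $j=k+1$ to $j=k+m=k$, so the range is empty and the error term is identically zero. This shows that the inequality in Theorem \ref{Theorem1.5} collapses to an exact equality, which is precisely the statement that $N(x^k,r)$ equals the main term.

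Next I would simplify the main term under the substitution $m=0$. The main term is $\mu_{k+m-r}\binom{q}{r}q^{k-r}$, which becomes $\mu_{k-r}\binom{q}{r}q^{k-r}$. Unpacking the definition of $\mu_s$ with $s=k-r$, namely
$$\mu_{k-r}=\sum_{j=0}^{k-r}(-1)^j\binom{q-r}{j}q^{-j},$$
I would substitute this sum directly to obtain
$$N(x^k,r)=\binom{q}{r}q^{k-r}\left(\sum_{j=0}^{k-r}(-1)^j\binom{q-r}{j}q^{-j}\right),$$
which is exactly the claimed formula. No analytic estimates or character sums are needed at this stage; the corollary is purely a matter of specializing parameters and reading off the vanishing of the error sum.

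There is essentially no hard part here, since the corollary is a clean specialization; the only point requiring a moment of care is verifying that the summation range $\{k+1,\ldots,k+m\}$ is genuinely empty when $m=0$ (rather than, say, containing a single degenerate term), so that the error bound is exactly $0$ and not merely small. Once that is confirmed, the identification with the formula of \cite{KK} follows immediately, which also serves as a useful consistency check on the normalization of the main term in Theorem \ref{Theorem1.5}.
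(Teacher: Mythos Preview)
Your proposal is correct and follows essentially the same approach as the paper: the paper likewise remarks that when $m=0$ the error sum in Theorem~\ref{Theorem1.5} is empty, so the main term $\mu_{k-r}\binom{q}{r}q^{k-r}$ gives the exact formula of \cite{KK}. Your additional observation that the summation range $\{k+1,\ldots,k\}$ is genuinely empty is the one point of care, and you have handled it correctly.
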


 When  $r=k+m$,  there is only one term in the error estimate and we obtain the following corollary, which was first  proved
in \cite{LW2}.
\begin{cor} Let $f(x) \in \f [x]$ be a polynomial of $\text{deg}(f)=k+m\leq q-1$. Then,
\begin{align*}
\left|N({f(x), k+m})-\frac{1}{q^{m}}{q \choose k+m}\right|\leq
{\frac {q}p+m\sqrt{q}+k+m \choose k+m}.
 \end{align*}
\end{cor}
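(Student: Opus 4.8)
The plan is to read off this corollary as the single special case $r=k+m$ of Theorem \ref{Theorem1.5}, so that the whole task reduces to simplifying both sides of that inequality under this substitution. First I would treat the main term $\mu_{k+m-r}{q \choose r}q^{k-r}$. Putting $r=k+m$ makes the subscript equal to $k+m-r=0$, and straight from the definition one has $\mu_0=\sum_{j=0}^{0}(-1)^{j}{q-r \choose j}q^{-j}=1$. Consequently the main term collapses to ${q \choose k+m}\,q^{k-(k+m)}=\frac{1}{q^{m}}{q \choose k+m}$, which is exactly the quantity subtracted inside the absolute value in the corollary.

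The second step, which is where essentially all the (modest) content lies, is to show that the error sum on the right of Theorem \ref{Theorem1.5} collapses to a single summand. Every term in that sum carries the factor ${j \choose r}={j \choose k+m}$, and since the summation index ranges only over $k+1\le j\le k+m$, this binomial coefficient vanishes for every $j<k+m$; hence only the top index $j=k+m$ survives. I would then evaluate that surviving term directly: ${k+m \choose k+m}=1$, the factor ${m-1 \choose k+m-j}={m-1 \choose 0}=1$, and $\sqrt{q}^{\,k+m-j}=\sqrt{q}^{\,0}=1$, leaving precisely ${\frac{q}{p}+m\sqrt{q}+k+m \choose k+m}$, the asserted bound.

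Because the argument is a pure specialization, there is no genuine analytic obstacle; the only care needed is in the combinatorial bookkeeping at the boundary of the summation range. In particular one should confirm that the index $j=k+m$ actually lies in the range $k+1\le j\le k+m$, which holds exactly when $m\ge1$, and note that for $m=0$ the error sum is empty so the inequality degenerates to the sharp identity already recorded in the preceding corollary. Once these checks are in place, Theorem \ref{Theorem1.5} yields the stated estimate immediately.
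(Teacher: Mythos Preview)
Your proposal is correct and follows exactly the paper's approach: the corollary is obtained by specializing Theorem~\ref{Theorem1.5} at $r=k+m$, so that $\mu_0=1$ gives the main term and the factor ${j\choose k+m}$ kills all summands except $j=k+m$ in the error estimate. Your boundary check for $m=0$ versus $m\ge 1$ is a bit more careful than the paper, but the argument is the same.
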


When $r=k+m-1$,  there are two terms in the error estimate. Combining the two terms, we obtain the following corollary,  which is already a new result.
 \begin{cor}Let $f(x) \in \f [x]$ be a polynomial of $\text{deg}(f)=k+m\leq q-1$. Then,
 \begin{align*}
&\left|N(f(x), k+m-1)-\frac{k+m-1}{q^{m}}{q \choose k+m-1}\right|\\
&\leq  {\frac {q}p+m\sqrt{q}+k+m \choose k+m}((m-1)\sqrt{q}+k+m).
 \end{align*}
 \end{cor}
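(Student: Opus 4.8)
The plan is to deduce this corollary directly from Theorem \ref{Theorem1.5} by specializing to $r = k+m-1$, so no new machinery is required; the entire content lies in simplifying the main term and collapsing the error sum into a single clean bound. First I would evaluate the main term. Putting $s = k+m-r = 1$ in the definition of $\mu_s$ gives $\mu_1 = 1 - \frac{q-r}{q} = \frac{r}{q} = \frac{k+m-1}{q}$. Hence the main term $\mu_{k+m-r}\binom{q}{r}q^{k-r}$ becomes $\frac{k+m-1}{q}\binom{q}{k+m-1}q^{\,1-m} = \frac{k+m-1}{q^{m}}\binom{q}{k+m-1}$, which is exactly the quantity subtracted in the statement.

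Next I would isolate the surviving error terms. In the sum of Theorem \ref{Theorem1.5} the index $j$ ranges over $k+1 \leq j \leq k+m$, but the factor $\binom{j}{r} = \binom{j}{k+m-1}$ vanishes unless $j \geq k+m-1$, so at most the two indices $j = k+m-1$ and $j = k+m$ contribute. Writing $A = \frac{q}{p} + m\sqrt{q}$ for brevity, the $j = k+m$ term equals $(k+m)\binom{A+k+m}{k+m}$, using $\binom{k+m}{k+m-1} = k+m$, $\binom{m-1}{0} = 1$, and $\sqrt{q}^{\,0} = 1$; the $j = k+m-1$ term equals $(m-1)\sqrt{q}\,\binom{A+k+m-1}{k+m-1}$, using $\binom{m-1}{1} = m-1$ and $\sqrt{q}^{\,1} = \sqrt{q}$. (When $m=1$ only the first term is present and the coefficient $m-1$ kills the second automatically, so the bound still holds; the genuinely new situation is $m \geq 2$.)

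The only step asking for a short argument is to bound the second generalized binomial coefficient by the first. For a nonnegative integer $n$ and real $x$ one has the ratio identity $\binom{x+n}{n}\big/\binom{x+n-1}{n-1} = \frac{x+n}{n}$, so taking $x = A$ and $n = k+m$ yields $\binom{A+k+m}{k+m} = \frac{A+k+m}{k+m}\binom{A+k+m-1}{k+m-1} \geq \binom{A+k+m-1}{k+m-1}$, the inequality holding because $A = \frac{q}{p}+m\sqrt{q} \geq 0$. Replacing $\binom{A+k+m-1}{k+m-1}$ by the larger $\binom{A+k+m}{k+m}$ in the $j = k+m-1$ term and summing the two contributions gives $\binom{A+k+m}{k+m}\big((m-1)\sqrt{q} + k+m\big)$, which is precisely the asserted bound. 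I expect this monotonicity estimate to be the sole (and very mild) obstacle; every other step is routine bookkeeping of the specialization $r = k+m-1$ inside Theorem \ref{Theorem1.5}.
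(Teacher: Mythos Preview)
Your proposal is correct and matches the paper's approach: the paper simply notes that for $r=k+m-1$ there are two error terms and that ``combining the two terms'' yields the corollary, which is exactly the specialization and monotonicity step you carry out.
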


For general $r$, there will be more terms in the error estimate. This makes it harder to estimate the error term.
However, as we shall see, the above $j$-th error term in the error estimate is sometimes increasing in $j$ and thus
we can combine all the error terms into a single error term. This helps in obtaining a much simpler asymptotic formula, as done
in next subsection.

The paper is organized as follows. In the end of this introductory section,  some asymptotic analysis for some special parameters are given.
In section 2, we prove the main result Theorem \ref{Theorem1.5}  by a key counting formula given in Lemma \ref{lem1.1}. In section 3 and 4, we introduce a sieving technique and a character sum derived by the Weil bound respectively.  The proof of Lemma \ref{lem1.1} will be given in Section 5.

\subsection{Asymptotic Analysis}
As an illustration, we show that our bound above can be used to give a nontrivial asymptotic formula.
We assume  $q=p$ is prime for simplicity.  Then we find simple conditions under which the error term can be significantly simplified.
Please note that the binomial
coefficients for real numbers are defined by $${a \choose b}=\frac{\Gamma{(a+1)}}{\Gamma{(b+1)}\Gamma{(a-b+1)}}.$$
\begin{cor} \label{Cor1.9}
 Let $q=p$ and $f(x)$ be a polynomial of degree $k+m$. Suppose $k=cp, m=p^{\delta}, r=k+p^\lambda$, where $c\in(0, 1), \delta\in(0, 1/4), \lambda\in(0, \delta)$ are constants. As $p$ goes to infinity,  we have
 $$N(f(x), r)=\mu_{k+m-r}{p \choose r}p^{k-r}(1+o(1)).$$
\end{cor}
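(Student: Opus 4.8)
The plan is to substitute the hypotheses $q=p$, $k=cp$, $m=p^\delta$, $r=k+p^\lambda$ directly into Theorem~\ref{Theorem1.5} and show that its error term is negligible against the main term $M:=\mu_{k+m-r}\binom{p}{r}p^{k-r}$. Writing $E$ for the right-hand side of the inequality in Theorem~\ref{Theorem1.5} (with $q/p=1$ and $\sqrt q=\sqrt p$), Theorem~\ref{Theorem1.5} gives $|N(f(x),r)-M|\le E$, so $N(f(x),r)=M\bigl(1+O(E/M)\bigr)$ and everything reduces to proving $E/M\to 0$. I will establish this by showing that $\log M=\Theta(p)$ while $\log E=o(p)$.

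For the main term, first note $k+m-r=p^\delta-p^\lambda\ge 1$ for large $p$ since $\lambda<\delta$, so the monotonicity of the alternating sum recorded before Theorem~\ref{Theorem1.5} yields $r/p\le \mu_{k+m-r}\le 1$; as $r/p=c+p^{\lambda-1}\to c>0$, the factor $\mu_{k+m-r}$ stays between two positive constants. The binomial is balanced, $r/p\to c\in(0,1)$, so by the Stirling/entropy estimate $\log\binom{p}{r}=\bigl(H(c)+o(1)\bigr)p$ with $H(c)=-c\ln c-(1-c)\ln(1-c)>0$, while $(k-r)\log p=-p^\lambda\log p=o(p)$ because $\lambda<1$. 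Hence $\log M=\bigl(H(c)+o(1)\bigr)p=\Theta(p)$, i.e. $M=e^{\Theta(p)}$.

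For the error term I bound each factor of the generic summand $\binom{j}{r}\binom{1+m\sqrt p+j}{j}\binom{m-1}{k+m-j}\sqrt{p}^{\,k+m-j}$ over the effective range $r\le j\le k+m$ (the summand vanishes for $j<r$, leaving fewer than $p^\delta$ nonzero terms). Since $r$ lies within $p^\delta$ of $k+m$, one has $\binom{j}{r}\le\binom{k+m}{k+m-r}=e^{O(p^\delta\log p)}$; similarly $\binom{m-1}{k+m-j}\le 2^{m}=e^{O(p^\delta)}$ and $\sqrt{p}^{\,k+m-j}\le p^{(p^\delta-p^\lambda)/2}=e^{O(p^\delta\log p)}$. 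The decisive factor is the middle binomial: writing it as $\binom{j+(1+m\sqrt p)}{1+m\sqrt p}$ and using $1+m\sqrt p=p^{\delta+1/2}(1+o(1))\ll j\asymp cp$, the bound $\binom{N}{a}\le(eN/a)^a$ gives $\log\binom{1+m\sqrt p+j}{j}=O\!\bigl(m\sqrt{p}\,\log p\bigr)=O\!\bigl(p^{\delta+1/2}\log p\bigr)$. Multiplying the four bounds and the factor $p^\delta$ counting the terms, $\log E=O\!\bigl(p^{\delta+1/2}\log p\bigr)$.

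Finally, the hypothesis $\delta<\tfrac14$ gives $\delta+\tfrac12<\tfrac34<1$, so $p^{\delta+1/2}\log p=o(p)$ and thus $\log E=o(p)$. Therefore $\log(E/M)=O\!\bigl(p^{\delta+1/2}\log p\bigr)-\Theta(p)\to-\infty$, i.e. $E/M\to 0$, which is the desired conclusion. The one genuine obstacle is controlling the middle binomial $\binom{q/p+m\sqrt q+j}{j}$: it is the only factor of the error whose logarithm exceeds the scale $p^\delta\log p$, and keeping it at size $e^{o(p)}$ is precisely what caps the admissible growth of $m$; the estimate needs only $\delta<\tfrac12$, so the stated $\delta<\tfrac14$ leaves ample room. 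One must also remember to check that $\mu_{k+m-r}$ does not degenerate, which is where $\lambda<\delta$ and $0<c<1$ enter.
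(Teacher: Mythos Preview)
Your argument is correct. Both you and the paper apply Theorem~\ref{Theorem1.5}, bound $\mu_{k+m-r}$ between positive constants via $r/p\le\mu_{k+m-r}\le 1$, and then show that the error term is negligible against the main term. The difference lies in how the error sum is controlled. The paper computes the ratio $E_{j+1}/E_j$ and shows the summands are increasing in $j$, so that the maximum is attained at $j=k+m$; it then compares $mE_{k+m}$ directly against $\binom{p}{r}p^{k-r}$ via the elementary bounds $(n/l)^l\le\binom{n}{l}\le(en/l)^l$. You instead bound each summand uniformly over $r\le j\le k+m$ and absorb the $p^\delta$ terms into the count, reducing everything to the logarithmic comparison $\log E=O(p^{\delta+1/2}\log p)$ versus $\log M\sim H(c)p$.

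Your route is somewhat more efficient: the paper's monotonicity step requires $E_{j+1}/E_j>1$, which (after their lower bound on the ratio) forces roughly $cp>p^{2\delta+1/2}$, i.e.\ $\delta<\tfrac14$; this is exactly where the hypothesis $\delta\in(0,\tfrac14)$ is used in the paper. Your uniform bound bypasses monotonicity entirely, and as you observe, it goes through for any $\delta<\tfrac12$. The paper's approach, on the other hand, isolates the dominant term explicitly and would be better suited to extracting sharper constants or a more refined asymptotic for the error.
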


\begin{proof}
By Theorem \ref{Theorem1.5}, we have
 \begin{align*}
\left|N(f(x), r)-\mu_{k+m-r} {p \choose r}p^{k-r}
\right|&\leq \sum_{j=r}^{k+m}  {j \choose r}{m\sqrt{p}+1+j \choose m\sqrt{p}+1}{m-1 \choose k+m-j} p^{\frac{k+m-j}2}\\
 &\leq m\cdot \max_{r\leq j\leq k+m} E_j,
 \end{align*}
where $$E_j={j \choose r}{m\sqrt{p}+1+j \choose m\sqrt{p}+1}{m-1 \choose k+m-j} {p}^{\frac {k+m-j}2}.$$
One computes that for $r\leq j<k+m$,
$$\frac {E_{j+1}}{E_j}=\frac{(j+1)}{(j+1-r)}\cdot \frac {(m\sqrt{p}+j+2)}{(j+1)} \cdot \frac{(k+m-j)}{(j-k)\sqrt{p}} .$$
       Write $j=r+j'$, where $0 \leq j'< k+m-r=p^{\delta}-p^{\lambda}$. Then
      $$\frac {E_{j+1}}{E_j}= \frac {(m\sqrt{p}+r+j'+2)}{(j'+1)}\cdot\frac{(p^{\delta}-p^{\lambda}-j')}{(p^{\lambda}+j')\sqrt{p}}.$$
Since $0\leq j' < p^{\delta}-p^{\lambda}$, we deduce
 $$\frac {E_{j+1}}{E_j}> \frac {(m\sqrt{p}+r)}{(p^{\delta}-p^{\lambda})}\cdot\frac{1}{p^{\delta}\sqrt{p}}.$$
Note that $r\geq cp$ and $\lambda <\delta <\frac{1}{4}$. It follows that for $p$ sufficiently large, we have $E_{j+1}/E_j >1$ for all $j$,
thus $E_j$ is increasing in $j$ and
$$\max_{r\leq j\leq k+m}E_j={k+m \choose r}{m\sqrt{p}+k+m+1 \choose m\sqrt{p}+1}.$$
    %
As noted in the beginning of this section,
   $$0< c \leq \frac{r}{p} = 1 - \frac{p-r}{p} \leq \mu_{k+m-r} \leq 1.$$
 To complete the proof of the corollary,  it suffices to show
  \[\lim_{p\rightarrow\infty}\frac {m {k+m \choose k+m-r}{m\sqrt{p}+1+k+m \choose m\sqrt{p}+1}}{{p \choose r}p^{k-r}}=0.\]
Since  $k+m-r \leq m \leq (k+m)/2$ and $1 \leq r-k\leq m$, it is enough to prove
 \[\lim_{p\rightarrow\infty}\frac {m {k+m \choose m}p^m{m\sqrt{p}+1+k+m \choose m\sqrt{p}+1}}{{p \choose r}}=0.\]
By the inequalities
 \[(\frac {n}l)^l \leq {n \choose l} \leq (\frac {en}l)^l,\]
it is sufficient to have
 \[\lim_{p\rightarrow\infty}\frac{m
 (e(k+m)p)^{m}(e+e\frac {k+m}{m\sqrt{p}+1})^{m\sqrt{p}+1}}{(\frac pr)^r}=0.\]
 Since $k=cp, m=p^{\delta}, r=cp+p^\lambda$ and $c\in(0, 1), \delta\in(0, 1/4), \lambda\in(0, \delta)$, by taking logarithm, it is equivalent to have
  \[\lim_{p\rightarrow\infty}\left({\delta\ln p+2p^\delta \ln p+ p^{1/2+\delta}\ln (e+2cp^{1/2-\delta})}+{cp \ln c}\right)=-\infty.\]
This is clearly satisfied since $0<c<1$. We obtain the desired
asymptotic formula
 $$N(f(x), r)=\mu_{k+m-r}{p \choose r}p^{k-r}(1+o(1)).$$
\end{proof}

Note that our asymptotic analysis here only considers the case $q=p$, $k$ is large, $m$ is small and $r$ is large.  It is certainly possible to find other range of parameters for which the same asymptotic formula holds. However, note that $m$ must be bounded by $\sqrt{q}$ in order for our estimate gives a non-trivial estimate. To keep this paper focused, such finer analysis together with its applications to list decoding and bounded distance decoding will be
discussed in a future work.

Beside coding theory, our result may have potential applications in number theory and graph theory. In number theory, it is a classical problem to
understand the factorization pattern of a family of polynomials.  In graph theory, it is related to the spectrum distribution of Wenger type graphs,
see \cite{CLWW}.

\section{ Proof of the main theorem}
In this section we prove the following main result (Theorem \ref{Theorem1.5} in Section 1).
\begin{thm} \label{Theorem3.1}
Let $f(x) \in \f [x]$ be a polynomial of $\text{deg}(f)=k+m\leq q-1$.
For all integers $0\leq r\leq k+m$, we have
\begin{align*}
\left|N(f(x), r)-\mu_{k+m-r}{q \choose r}q^{k-r}
\right|\leq \sum_{j=k+1}^{k+m}  {j \choose r}{\frac {q}p+m\sqrt{q}+j \choose j}{m-1 \choose k+m-j} \sqrt{q}^{k+m-j}.
 \end{align*}
 \end{thm}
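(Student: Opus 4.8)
The plan is to read off the root set of $f+g$, reduce the count to a clean subset-counting problem by weighted inclusion--exclusion, peel off the main term with a binomial identity, and control the remainder through the distinct-coordinate sieve together with the Weil bound. First I would reparametrize. Since $f$ is monic with $\de(f)=k+m$ and $\de(g)\le k-1$, the polynomial $h:=f+g$ ranges exactly over the monic degree-$(k+m)$ polynomials whose top $m+1$ coefficients (those of $x^{k},\dots,x^{k+m}$) are fixed equal to those of $f$, while $g$ supplies the $k$ free low-degree coefficients. For $T\subseteq\f$ let $M(T)$ count such $h$ that vanish on all of $T$. Counting incidences $(h,T)$ and inverting, the weighted inclusion--exclusion formula for ``exactly $r$ roots'' gives
\[
N(f,r)=\sum_{j=r}^{k+m}(-1)^{j-r}\binom{j}{r}E_j,\qquad E_j:=\sum_{|T|=j}M(T).
\]

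Next I would evaluate $M(T)$. Writing $P_T(x)=\prod_{y\in T}(x-y)$, a polynomial $h$ in our family vanishes on $T$ iff $h=P_TQ$ for a monic $Q$ of degree $k+m-j$; matching the prescribed top $m+1$ coefficients of $P_TQ$ against those of $f$ determines the top coefficients of $Q$ recursively and, once $j>k$, leaves exactly $j-k$ polynomial constraints on the elementary symmetric functions $e_1(T),\dots,e_m(T)$. Hence $M(T)=q^{k-j}$ for $j\le k$ (the vanishing conditions are $j$ independent affine conditions on the $k$ free coefficients), $M(T)\in\{0,1\}$ is the indicator of those $j-k$ constraints for $k<j\le k+m$, and $M(T)=0$ for $j>k+m$. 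Inserting the value $\binom{q}{j}q^{k-j}$ for $j\le k$, and using $\binom{j}{r}\binom{q}{j}=\binom{q}{r}\binom{q-r}{j-r}$, the contribution of the ``expected'' values $\binom{q}{j}q^{k-j}$ across all $r\le j\le k+m$ collapses to
\[
\binom{q}{r}q^{k-r}\sum_{i=0}^{k+m-r}(-1)^i\binom{q-r}{i}q^{-i}=\mu_{k+m-r}\binom{q}{r}q^{k-r},
\]
which is exactly the asserted main term.

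It then remains to bound $E_j-\binom{q}{j}q^{k-j}$ for $k<j\le k+m$. Here I would open the indicator $M(T)$ using one additive character $\psi$ of $\f$ per constraint, so that $M(T)=q^{k-j}\sum_{\lambda\in\f^{j-k}}\psi\bigl(\sum_i\lambda_i c_i(T)\bigr)$, where the $c_i$ are the constraint polynomials in $e_1(T),\dots,e_m(T)$. The term $\lambda=0$ reproduces $\binom{q}{j}q^{k-j}$ exactly, leaving
\[
E_j-\binom{q}{j}q^{k-j}=q^{k-j}\sum_{\lambda\ne 0}\ \sum_{|T|=j}\psi\Bigl(\sum_i\lambda_i c_i(T)\Bigr).
\]
To estimate the inner symmetric character sum over $j$-subsets I would invoke the distinct-coordinate sieve of Section 3 to pass to ordered tuples, and then the Weil-type character-sum bound of Section 4 for the resulting bounded-degree polynomial over a residue ring; summing these over the sieve's partition structure and over $\lambda\ne0$, and dividing by $q^{j-k}$, yields the per-$j$ error $\binom{q/p+m\sqrt{q}+j}{j}\binom{m-1}{k+m-j}\sqrt{q}^{k+m-j}$, which is the content of the key counting formula, Lemma \ref{lem1.1}. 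Feeding it into $\bigl|N(f,r)-\mu_{k+m-r}\binom{q}{r}q^{k-r}\bigr|\le\sum_{j=k+1}^{k+m}\binom{j}{r}\bigl|E_j-\binom{q}{j}q^{k-j}\bigr|$ then finishes the proof.

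I expect the genuine obstacle to be exactly this uniform estimation of the symmetric character sum. The constraints $c_i$ are polynomials in the first $m$ elementary symmetric functions, so after lifting to ordered tuples their degrees grow with $m$, and keeping the Weil error at the clean level $\sqrt{q}^{\,k+m-j}$ while correctly accounting for the repeated-coordinate (partition) contributions of the sieve---which is precisely where the $q/p$ and $m\sqrt{q}$ factors in the error term originate---is the delicate bookkeeping to be carried out in Lemma \ref{lem1.1}. The remaining steps are formal once that estimate is in hand.
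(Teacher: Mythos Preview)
Your proof of Theorem \ref{Theorem3.1} is correct and matches the paper's argument essentially step for step: the weighted inclusion--exclusion expansion $N(f,r)=\sum_{j\ge r}(-1)^{j-r}\binom{j}{r}E_j$, the Vandermonde evaluation $E_j=\binom{q}{j}q^{k-j}$ for $j\le k$, the binomial identity $\binom{j}{r}\binom{q}{j}=\binom{q}{r}\binom{q-r}{j-r}$ collapsing the expected values into $\mu_{k+m-r}\binom{q}{r}q^{k-r}$, and the invocation of Lemma \ref{lem1.1} for the tail $k<j\le k+m$ are exactly the paper's steps. The one place your sketch diverges is inside Lemma \ref{lem1.1}: rather than opening the indicator with additive characters of $\f$ on the constraint polynomials in $e_1(T),\dots,e_m(T)$, the paper substitutes $x\mapsto 1/x$ to convert ``top $m+1$ coefficients prescribed'' into a congruence modulo $x^{m+1}$ and then averages over the multiplicative characters of $(\f[x]/(x^{m+1}))^*$ trivial on $\f^*$, which feeds directly into the Dirichlet $L$-function bound of Section 4---but since you explicitly defer to Lemma \ref{lem1.1} as a black box, the proof of the theorem itself is the same.
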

 The main technique of the proof is a weighted sieving formula and
 the following counting lemma,  which will be proved in Section 5.

\begin{lem}\label{lem1.1} Let $f(x) \in \f [x]$ be a monic polynomial of degree $d=k+m\leq q-1$. Let $M(f, r)$  denote
the number of pairs $(D_r, g(x))$ with $D_r$ being a $r$-subset in $\f$ and $g(x)\in \f[x]$ of degree at most $k-1$ satisfying
$$ (f(x)+g(x))|_{D_r} \equiv 0.$$ Then for $k+1\leq r\leq d$,  we have
 \begin{align*}
\left| {M(f, r)}-{{q \choose r}q^{k-r}}\right| \leq  {\frac {q}p+m\sqrt{q}+r \choose r}{m-1 \choose d-r} \sqrt{q}^{d-r}.
 \end{align*}
\end{lem}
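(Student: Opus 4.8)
The plan is to convert the count of pairs $(D_r,g)$ into a count of completely split polynomials, and then apply the distinct-coordinate sieve. Writing $P(x)=\prod_{a\in D_r}(x-a)$, a monic squarefree polynomial of degree $r$ that splits completely over $\f$, the requirement that $\deg g\le k-1$ and $(f+g)|_{D_r}\equiv 0$ is equivalent to $f+g=Ph$ for a monic $h$ of degree $d-r$ with $\deg(Ph-f)\le k-1$; since $\deg g<d=\deg f$, the quotient $h$ is uniquely determined, so $(D_r,g)\mapsto(P,h)$ is a bijection and $M(f,r)$ counts admissible pairs $(P,h)$. I would then dualize the top-degree condition by reversal. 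Writing $Q^{*}(x)=x^{\deg Q}Q(1/x)$, one has $(Ph)^{*}=P^{*}h^{*}$, and $\deg(Ph-f)\le k-1$ is exactly the statement that $P^{*}h^{*}\equiv f^{*}\pmod{x^{m+1}}$ in the residue ring $R=\f[x]/(x^{m+1})$. The crucial gain is that, because only the coefficients of $x^{0},\dots,x^{m}$ matter, each of these matching conditions is a polynomial of degree $\le m$ in the roots $a_1,\dots,a_r$ of $P$ (through the elementary symmetric functions $e_1,\dots,e_m$) --- a \emph{constant} degree, independent of $r$ and $q$.

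With this reduction, I would parametrize $P$ by its distinct roots $a_1,\dots,a_r$ and $h$ by its $d-r$ free coefficients, giving $d$ coordinates subject to the $m$ matching conditions (the $x^0$-condition being automatic since all three polynomials are monic). I would detect the conditions with additive characters, averaging $q^{-m}\sum_{b}\psi\bigl(\sum_i b_i C_i\bigr)$ over $b\in\f^{m}$, where $C_i$ denotes the $i$-th matching condition. The trivial character $b=0$ contributes $q^{-m}$ times the number of coordinate tuples with distinct $a_i$ and free $h$-coefficients, which equals $q^{-m}\,q(q-1)\cdots(q-r+1)\,q^{d-r}$; dividing by $r!$ to pass from labelled roots to the set $D_r$ gives precisely the main term $\binom{q}{r}q^{d-r-m}=\binom{q}{r}q^{k-r}$. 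The error is the contribution of the nontrivial $b$, a sum of character sums $\sum\psi(F_b)$ of symmetric polynomials $F_b$ of bounded degree over the locus where the $a_i$ are distinct and the $h$-coefficients free.

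To bound these I would invoke the distinct-coordinate sieve of \cite{LW2,LW3}, which expresses the distinct-coordinate character sum as a signed sum, indexed by set partitions of $\{1,\dots,r\}$, of ordinary character sums of the induced polynomials on the corresponding diagonals, and then apply Weil's bound to each ordinary sum. Since $F_b$ has degree bounded in terms of $m$ alone, the Weil constant on every stratum is controlled by $m$, which produces the growth packaged into the binomial coefficient $\binom{q/p+m\sqrt q+r}{r}$ (here the $r$ root coordinates each contribute one power of $\sqrt q$); the $d-r$ free coefficients of $h$ each contribute an independent power of $\sqrt q$, accounting for the explicit factor $\sqrt q^{\,d-r}$, while the combinatorics of which partition and degree types actually occur yields $\binom{m-1}{d-r}$. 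Summing over the nontrivial $b$ and dividing by $r!$ then assembles the claimed bound $\binom{q/p+m\sqrt q+r}{r}\binom{m-1}{d-r}\sqrt q^{\,d-r}$.

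The main obstacle is the uniform control of the character sums across all strata of the sieve. Weil's bound must hold for $\sum\psi(F_b)$ \emph{uniformly} in $b\neq 0$ and on every diagonal, and it degenerates on the strata where $F_b$ reduces to a constant or to an additive $p$-th power; isolating and estimating these degenerate strata is exactly what replaces the naive $q$ by $q/p$ --- the index of the subgroup on which the relevant additive character is trivial --- and is the source of the $q/p$ inside the binomial coefficient. The second delicate point, deferred to Section 5, is the bookkeeping that collapses the weighted partition sum together with the sum over $b$ into a clean closed form, and in particular the verification that exactly the degree count $d-r$ governs the surviving power of $\sqrt q$ and the combinatorial factor $\binom{m-1}{d-r}$.
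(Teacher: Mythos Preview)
Your reduction by polynomial reversal to a congruence modulo $x^{m+1}$ is exactly the paper's first move, and invoking the distinct-coordinate sieve of \cite{LW2,LW3} together with Weil is the right framework. But the detection step is where your plan diverges from the paper and runs into trouble.

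The paper detects the congruence $P^{*}h^{*}\equiv f^{*}\pmod{x^{m+1}}$ with \emph{multiplicative} characters $\chi$ of $(\f[x]/(x^{m+1}))^{*}$ trivial on $\f^{*}$, not with additive characters on the $m$ coefficient equations. The reason is that the multiplicative choice gives the factorization
\[
\chi\bigl(P^{*}h^{*}\bigr)=\Bigl(\prod_{i=1}^{r}\chi(1-a_i x)\Bigr)\cdot\chi(h^{*}),
\]
so after the sieve each stratum $X_\tau$ contributes a \emph{product of one-variable sums} $\prod_i\bigl(\sum_{a\in\f}\chi^{i}(1-ax)\bigr)^{c_i}$, to which Weil applies directly and uniformly. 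Your additive detector $\psi\bigl(\sum b_iC_i\bigr)$ does \emph{not} factor over the roots: the $C_i$ mix the elementary symmetric functions of the $a_j$ with the coefficients of $h$, so on each diagonal you face genuinely multivariate exponential sums of degree up to $m$, and there is no mechanism that reduces these to the clean product form needed to assemble $\binom{q/p+m\sqrt q+r}{r}$.

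Relatedly, your accounting for the two remaining factors is off. The factor $\binom{m-1}{d-r}\sqrt{q}^{\,d-r}$ does not come from ``each of the $d-r$ free $h$-coefficients contributing a $\sqrt q$'' nor from partition combinatorics; in the paper it is exactly the L-function bound of Lemma~\ref{weil} applied to the \emph{separate} sum $\sum_{\deg h\le d-r}\chi(h)$, using that $L(\chi,T)/(1-T)$ has degree $\le m-1$ when $\chi$ is nontrivial with $\chi(\f^{*})=1$. And the $q/p$ in $\binom{q/p+m\sqrt q+r}{r}$ arises not from ``additive $p$-th power degeneration'' of a Weil sum, but from counting those nontrivial $\chi\in G$ for which some power $\chi^{i}$ ($2\le i\le r$) becomes trivial, on which strata the one-variable sum equals $q$ rather than being bounded by $m\sqrt q$. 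Without the multiplicative factorization, neither of these bookkeeping identifications is available in your scheme.
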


\begin{proof}[{\bf Proof of Theorem \ref{Theorem3.1}}]
The proof is based on two different kinds of inclusion-exclusion sievings.
We shall let $g(x) \in \f[x]$ denote a polynomial of degree at most $k-1$.
For $c\in \f$, let $P_c$ denote the
property that $f(x)+g(x)$ has $c$ as a root. For a subset $C\subseteq \f$,  let  $N_C$  be the number of $g(x)$ such that $f(x)+g(x)$
has property $P_c$ for each $c\in {C}$.  The $|C|\times |C|$ Vandermonde matrix formed using the elements of ${C}$ is non-singular.
It follows by linear algebra that for $|C|\leq k$, we have $N_C=q^{k-|C|}$.
In the case $r=0$,
the inclusion-exclusion sieving \cite{St} implies that
 \begin{align*}
N(f, 0)&=q^k-\sum_{c\in \f}N_{\{c\}}+
\cdots+(-1)^{d}\sum_{\{c_1, c_2, \dots, c_{d}\}\subset \f}N_{\{c_1, c_2, \dots, c_{d}\}}\\
&=q^k-{q \choose 1} q^{k-1}+{q \choose 2} q^{k-2}-\cdots+(-1)^k{q \choose k} q^{0}+\sum_{j=k+1}^d(-1)^j N_j,
 \end{align*}
where $N_j$ is the number of pairs $(D_j, g(x))$ with $D_j$ being a $j$-subset in $\f$ and $g(x)\in \f[x]$ of degree at most $k-1$ satisfying
$$ (f(x)+g(x))|_{D_j} \equiv 0.$$
Applying Lemma \ref{lem1.1}, we have
\begin{align*}
\left|N(f, 0)-\sum_{i=0}^{k+m} (-1)^i {q \choose i}q^{k-i}
\right|&\leq \sum_{j=k+1}^d{\frac {q}p+m\sqrt{q}+j \choose j}{m-1 \choose d-j} \sqrt{q}^{d-j}.
 \end{align*}
This proves the theorem in the case $r=0$. More generally,
for $0\leq r\leq d$, using the weighted inclusion-exclusion sieving formula,  we deduce
\begin{align*}
N(f, r)&=\sum_{\{c_1, c_2, \dots, c_{r}\}\subset \f}N_{\{c_1, c_2, \dots, c_{r}\}}-{r+1 \choose r} \sum_{\{c_1, c_2, \dots, c_{r+1}\}\subset \f}N_{\{c_1, c_2, \dots, c_{r+1}\}}+\cdots\\
 &=\sum_{j=r}^{k} (-1)^{j-r} {j \choose r}{q \choose j} q^{k-j}+\sum_{j=k+1}^d(-1)^{j-r} {j \choose r}N_j.
 \end{align*}
Applying Lemma \ref{lem1.1} again, we have
\begin{align*}
&\left|N(f, r)-\sum_{j=r}^{d} {j \choose r}{q \choose j}(-1)^{j-r} q^{k-j}
\right|\\
&\leq \sum_{j=k+1}^d{j \choose r}{\frac {q}p+m\sqrt{q}+j \choose j}{m-1 \choose d-j} \sqrt{q}^{d-j}.
  \end{align*}
By the elementary properties of binomials, the main term can be rewritten and we obtain the following final form
\begin{align*}
&\left|N(f, r)-{q \choose r}q^{k-r}\displaystyle{\left(\sum_{j=0}^{d-r} (-1)^{j}{q-r \choose j} q^{-j}\right)}
\right|\\
&\leq \sum_{j=k+1}^d{j \choose r}{\frac {q}p+m\sqrt{q}+j \choose j}{m-1 \choose d-j} \sqrt{q}^{d-j}.
 \end{align*}
The theorem is proved.
\end{proof}

\section{A distinct coordinate sieving formula}
In this section we introduce a sieving formula, which is a main technique for establishing Lemma \ref{lem1.1} and might have its own interests.
Roughly speaking, this formula significantly improves the classical
inclusion-exclusion sieve in many distinct coordinates counting
problems. We cite it here without proof. For details and related
applications please refer to \cite{LW2, LW3}.

Let $\Omega$ be a finite set, and let $\Omega^k$ be the Cartesian
product of $k$ copies of $\Omega$. Let $X$ be a
subset of $\Omega^k$. 
Define $\overline{X}=\{(x_1,x_2,\ldots,x_k)\in X \ | \ x_i\ne x_j,
\forall i\ne j\}.$
Let $f(x_1,x_2,\dots,x_k)$ be a complex valued function defined over
$X$ and
$$F=\sum_{x \in \overline{X}}f(x_1,x_2,\dots,x_k).\label{1.00}$$
Many problems arising in number theory and coding theory are reduced to evaluate $F$ very carefully. However, the direct inclusion-exclusion
sieving has too many terms and thus usually produces too much errors.
Roughly speaking, our formula describes what happens for those cancellations and make it possible to compute $F$ explicitly.

 Let $S_k$ be the symmetric group on $\{1,2,\ldots, k\}$.
Each permutation $\tau\in S_k$ factorizes uniquely 
as a product of disjoint cycles and  each
  fixed point is viewed as a trivial cycle of length $1$.
   Two permutations in $S_k$ are conjugate if and only if they
have the same type of cycle structure (up to the order).
 For $\tau\in S_k$, define the sign of $\tau$ to
  $\sign(\tau)=(-1)^{k-l(\tau)}$, where $l(\tau)$ is the number of
 cycles of $\tau$ including the trivial cycles.
 For a permutation $\tau=(i_1i_2\cdots i_{a_1})
  (j_1j_2\cdots j_{a_2})\cdots(l_1l_2\cdots l_{a_s})$
  with $1\leq a_i, 1 \leq i\leq s$, define
  \hskip 1.0cm
  \begin{align} \label{1.1}
     X_{\tau}=\left\{
(x_1,\dots,x_k)\in X,
 x_{i_1}=\cdots=x_{i_{a_1}},\ldots, x_{l_1}=\cdots=x_{l_{a_s}}
 \right\}.
\end{align}
 Similarly, for $\tau \in S_k$,  define $F_{\tau}=\sum_{x \in
X_{\tau} } f(x_1,x_2,\dots,x_k). $ Now we can state our sieve
formula.   We remark that there are many other interesting
corollaries of this formula. For interested reader we refer to
\cite{LW2}.

\begin{thm} \label{thm1.0}Let $F$ and $F_{\tau}$ be defined as above. Then   \begin{align}
  \label{1.5} F=\sum_{\tau\in S_k}{\sign(\tau)F_{\tau}}.
    \end{align}
 \end{thm}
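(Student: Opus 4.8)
The plan is to prove the identity by substituting the definition of $F_\tau$, interchanging the order of summation, and then evaluating, for each fixed point $x \in X$, an inner signed sum over the permutations $\tau$ for which $x \in X_\tau$. First I would write
\begin{align*}
\sum_{\tau\in S_k}\sign(\tau)F_\tau=\sum_{\tau\in S_k}\sign(\tau)\sum_{x\in X_\tau}f(x)=\sum_{x\in X}f(x)\sum_{\substack{\tau\in S_k\\ x\in X_\tau}}\sign(\tau),
\end{align*}
so that the whole problem reduces to computing the coefficient $c(x):=\sum_{\tau:\,x\in X_\tau}\sign(\tau)$ attached to each $x\in X$. The goal is to show that $c(x)=1$ when $x\in\overline{X}$ and $c(x)=0$ otherwise; the theorem then follows at once, since the surviving terms are exactly $\sum_{x\in\overline{X}}f(x)=F$.

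The key step is to reinterpret the membership condition $x\in X_\tau$ combinatorially. To each point $x=(x_1,\dots,x_k)\in X$ I would associate the partition $\pi(x)$ of $\{1,2,\dots,k\}$ whose blocks are the level sets $\{i:x_i=a\}$ for $a\in\Omega$. By the definition of $X_\tau$ in (\ref{1.1}), every cycle of $\tau$ forces its indices to have equal $x$-coordinates, so $x\in X_\tau$ holds precisely when each cycle of $\tau$ lies inside a single block of $\pi(x)$; equivalently, $\tau$ stabilizes every block of $\pi(x)$ and permutes it arbitrarily. Hence the set $\{\tau:x\in X_\tau\}$ is exactly the Young subgroup $\prod_{B}S_B$, the direct product over the blocks $B$ of $\pi(x)$ of the symmetric groups $S_B$ on each $B$.

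Next I would exploit multiplicativity of the sign. Since $\sign(\tau)=(-1)^{k-l(\tau)}=\prod_{c}(-1)^{|c|-1}$, the product being over the cycles $c$ of $\tau$, and since the cycles of such a $\tau$ distribute among the blocks, the sign factors across blocks. This yields
\begin{align*}
c(x)=\prod_{B\in\pi(x)}\Bigl(\sum_{\sigma\in S_B}\sign(\sigma)\Bigr).
\end{align*}
I would then invoke the elementary identity that $\sum_{\sigma\in S_n}\sign(\sigma)$ equals $1$ if $n=1$ and $0$ if $n\geq 2$, the latter because for $n\geq 2$ the even and odd permutations are equinumerous. Consequently $c(x)$ vanishes as soon as some block of $\pi(x)$ has size at least two, and equals $1$ exactly when every block is a singleton, i.e. when the coordinates $x_i$ are pairwise distinct, i.e. when $x\in\overline{X}$. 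Substituting back gives the claim.

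The only place that requires genuine care is the bookkeeping in the second step: I must verify that the length-one (trivial) cycles are correctly counted, so that fixed points of $\tau$ correspond to singleton blocks rather than being discarded, and that the description $\prod_B S_B$ is a bijection onto $\{\tau:x\in X_\tau\}$ rather than a mere inclusion. Both are routine once $\pi(x)$ is introduced, so I do not anticipate a serious obstacle; the argument is fundamentally a legitimate (finite) order-swap followed by the sign-cancellation identity $\sum_{\sigma\in S_n}\sign(\sigma)=0$ for $n\geq 2$.
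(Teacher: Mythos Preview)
Your argument is correct. Note, however, that the paper does not actually supply a proof of this theorem: it explicitly states ``We cite it here without proof'' and refers to \cite{LW2, LW3} for details. Your approach---swapping the two sums, identifying $\{\tau\in S_k:x\in X_\tau\}$ with the Young subgroup $\prod_{B\in\pi(x)}S_B$ associated to the level-set partition $\pi(x)$ of the coordinates, and then invoking $\sum_{\sigma\in S_n}\sign(\sigma)=[n=1]$---is the standard proof of this sieve identity and is essentially what one finds in those references.
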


Note that the symmetric group $S_k$ acts on $\Omega^k$ naturally by
permuting coordinates. That is, for $\tau\in S_k$ and
$x=(x_1,x_2,\dots,x_k)\in \Omega^k$,  $\tau\circ
x=(x_{\tau(1)},x_{\tau(2)},\dots,x_{\tau(k)}).$
  A subset $X$ in $\Omega^k$ is said to be symmetric if for any $x\in X$ and
any $\tau\in S_k$, $\tau\circ x \in X $.
%
%
%
 For $\tau\in S_k$, denote by $\overline{\tau}$
 the conjugacy class determined by $\tau$ and it can
also be viewed as the set of permutations conjugated to $\tau$.
Conversely, for given conjugacy class $\overline{\tau}\in C_k$,
denote by $\tau$ a representative permutation of this class. For
convenience we usually identify these two symbols.

In particular,  if  $X$ is symmetric and $f$ is a symmetric function
under the action of $S_k$,  we then have the following simpler
formula than (\ref{1.5}).
\begin{prop} \label{thm1.1} Let $C_k$ be the set of conjugacy  classes
 of $S_k$.  If $X$ is symmetric and $f$ is symmetric, then
 \begin{align}\label{7} F=\sum_{\tau \in C_k}\sign(\tau) C(\tau)F_{\tau},
  \end{align} where $C(\tau)$ is the number of permutations conjugated to
  $\tau$.
\end{prop}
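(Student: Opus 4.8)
The plan is to deduce this directly from the general sieve formula of Theorem \ref{thm1.0}, namely $F = \sum_{\tau\in S_k}\sign(\tau)F_\tau$, by reorganizing the sum over $S_k$ according to conjugacy classes. Two facts make this grouping possible: that $\sign$ is constant on each conjugacy class, and that $F_\tau$ is likewise constant on each conjugacy class once $X$ and $f$ are symmetric. The first is immediate from $\sign(\tau) = (-1)^{k-l(\tau)}$, since conjugate permutations share the same cycle type and hence the same number of cycles $l(\tau)$. The substance of the argument is therefore the second fact, and everything reduces to showing that $F_\tau = F_{\tau'}$ whenever $\tau'$ is conjugate to $\tau$.

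To prove that $F_\tau = F_{\tau'}$ when $\tau' = \sigma\tau\sigma^{-1}$ for some $\sigma\in S_k$, I would exhibit an explicit $f$-preserving bijection between $X_\tau$ and $X_{\tau'}$. The natural candidate is the coordinate-permutation map $x\mapsto \sigma^{-1}\circ x$, where $(\sigma^{-1}\circ x)_i = x_{\sigma^{-1}(i)}$. The key computation is that if $(i_1\cdots i_a)$ is a cycle of $\tau$, then $(\sigma(i_1)\cdots\sigma(i_a))$ is the corresponding cycle of $\tau' = \sigma\tau\sigma^{-1}$, and for $y = \sigma^{-1}\circ x$ one has $y_{\sigma(i_\ell)} = x_{\sigma^{-1}(\sigma(i_\ell))} = x_{i_\ell}$. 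Thus the defining equalities $x_{i_1} = \cdots = x_{i_a}$ for membership in $X_\tau$ translate precisely into the equalities $y_{\sigma(i_1)} = \cdots = y_{\sigma(i_a)}$ defining $X_{\tau'}$. Combined with the symmetry of $X$, which guarantees $\sigma^{-1}\circ x\in X$ whenever $x\in X$, this shows the map sends $X_\tau$ into $X_{\tau'}$, and the inverse $y\mapsto\sigma\circ y$ shows it is a bijection.

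Finally, since $f$ is symmetric and $y$ is merely a reordering of the coordinates of $x$, we have $f(y) = f(\sigma^{-1}\circ x) = f(x)$, so the bijection preserves summands and
\[
F_{\tau'} = \sum_{y\in X_{\tau'}} f(y) = \sum_{x\in X_\tau} f(x) = F_\tau.
\]
With both $\sign$ and $F_\tau$ depending only on the conjugacy class of $\tau$, the sum in Theorem \ref{thm1.0} collapses class by class: choosing a representative $\tau$ for each class $\overline{\tau}\in C_k$, that class contributes $C(\tau)\sign(\tau)F_\tau$, where $C(\tau)$ is the number of permutations conjugate to $\tau$, yielding $F = \sum_{\tau\in C_k}\sign(\tau)C(\tau)F_\tau$. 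The main obstacle is getting the conjugation-and-cycle bookkeeping exactly right — in particular using $\sigma^{-1}\circ x$ rather than $\sigma\circ x$ so that the indices align — but once the bijection is set up correctly, the remaining steps are formal.
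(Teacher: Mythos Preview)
Your proposal is correct and follows exactly the route the paper intends: the paper presents this proposition as an immediate consequence of Theorem~\ref{thm1.0} (indeed, it is stated without proof, with details deferred to \cite{LW2, LW3}), and your argument is precisely the natural deduction---group the sum $\sum_{\tau\in S_k}\sign(\tau)F_\tau$ by conjugacy classes after verifying that both $\sign(\tau)$ and $F_\tau$ are conjugacy-class invariants. Your bijection $x\mapsto\sigma^{-1}\circ x$ between $X_\tau$ and $X_{\sigma\tau\sigma^{-1}}$ is set up correctly under the paper's convention $(\tau\circ x)_i = x_{\tau(i)}$, and the rest is indeed formal.
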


\section{Bounds on Character Sums}

 Let $f(x)\in \f[x]$ be a monic polynomial of degree $n>0$.
 Let $\chi$ be a group homomorphism from $(\f[x]/(f(x)))^*$ to $\mathbb{C}^*$.
 We extend this definition to $\f[x]/(f(x))$ by defining $\chi(g)=0$ for $\gcd(g, f)\ne 1$.
 Define $$M_k(\chi)=\sum_{g\in\f[x], \text{monic}, \deg(g)=k} \chi(g).$$

 \begin{lem}\label{weil} Assume that $\chi$ is non-trivial.  Then for $k \geq 0$,
 $$|M_k(\chi)|\leq {n-1 \choose k} {\sqrt{q}}^k.$$
 Furthermore, if $\chi(\f^*)=1$, then for $n\geq 2$, we have
 $$\left| \sum_{g\in\f[x], \text{monic}, \deg(g)\leq k} \chi(g)\right|\leq {n-2 \choose k} {\sqrt{q}}^k.$$
 \end{lem}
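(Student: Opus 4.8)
The plan is to recognize the quantities $M_k(\chi)$ as the coefficients of the Dirichlet $L$-function attached to $\chi$ and then to feed this into the Riemann hypothesis for function fields. I introduce the generating power series
\[
L(u,\chi)=\sum_{g\ \textup{monic}}\chi(g)\,u^{\deg g}=\sum_{k\geq 0}M_k(\chi)\,u^k .
\]
The first step is to show $L(u,\chi)$ is a polynomial of degree at most $n-1$. For $k\geq n$, division with remainder shows that each residue class $a\bmod f$ contains exactly $q^{k-n}$ monic polynomials of degree $k$ (write $g=a+fh$ with $h$ monic of degree $k-n$), so $M_k(\chi)=q^{k-n}\sum_{a\bmod f}\chi(a)=0$ by orthogonality, since $\chi$ is non-trivial. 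Hence $L(u,\chi)=\sum_{k=0}^{n-1}M_k(\chi)u^k$, with constant term $M_0(\chi)=\chi(1)=1$.

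Second, I would invoke Weil's theorem. Since $\chi$ is non-trivial, $L(u,\chi)$ factors over $\mathbb{C}$ as $L(u,\chi)=\prod_{i=1}^{d}(1-\alpha_i u)$ with $d=\deg L\leq n-1$, and the Riemann hypothesis for curves over finite fields guarantees $|\alpha_i|\leq\sqrt{q}$ for every $i$ (the inverse roots from the primitive part have absolute value exactly $\sqrt{q}$, while those from the trivial Euler factors have absolute value $1$). Then $M_k(\chi)$ is, up to sign, the $k$-th elementary symmetric function of the $\alpha_i$, so
\[
|M_k(\chi)|=|e_k(\alpha_1,\dots,\alpha_d)|\leq{d\choose k}(\sqrt{q})^k\leq{n-1\choose k}(\sqrt{q})^k,
\]
which is the first bound.

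Third, for the refined estimate I will show that the extra hypothesis $\chi(\f^*)=1$ forces $u=1$ to be a zero of $L(u,\chi)$. Writing each nonzero residue of degree $<n$ uniquely as (leading coefficient)$\times$(monic polynomial), and using $\chi(c)=1$ for $c\in\f^*$, gives
\[
0=\sum_{a\bmod f}\chi(a)=\Bigl(\sum_{c\in\f^*}\chi(c)\Bigr)\Bigl(\sum_{g\ \textup{monic},\,\deg g<n}\chi(g)\Bigr)=(q-1)\,L(1,\chi),
\]
so $L(1,\chi)=0$. Thus $(1-u)\mid L(u,\chi)$, one $\alpha_i$ equals $1$, and the partial-sum generating function $L(u,\chi)/(1-u)$ is a polynomial of degree $\leq n-2$ whose coefficient of $u^k$ is exactly $\sum_{g\ \textup{monic},\,\deg g\leq k}\chi(g)$. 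Applying the same elementary-symmetric estimate to its $\leq n-2$ remaining inverse roots yields the bound ${n-2\choose k}(\sqrt{q})^k$; here $n\geq 2$ is precisely what makes the binomial meaningful, and indeed for $n=1$ the two hypotheses (non-trivial and trivial on $\f^*$) are incompatible, so nothing is lost.

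The only genuinely deep input is the second step, namely the bound $|\alpha_i|\leq\sqrt{q}$ on the inverse roots of $L(u,\chi)$, which is Weil's Riemann hypothesis for function fields; everything else — the polynomiality of $L$, the vanishing $L(1,\chi)=0$, and passing from roots to elementary-symmetric bounds — is elementary bookkeeping. The one point requiring a little care is the non-primitive case, where I must confirm that the trivial Euler factors $(1-\chi(P)u^{\deg P})$ contributed by primes $P\mid f$ have inverse roots of absolute value $1$, so that the uniform bound $|\alpha_i|\leq\sqrt{q}$ persists across all factors and both estimates go through without modification.
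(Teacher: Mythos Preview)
Your proposal is correct and follows essentially the same approach as the paper: both show the $L$-function is a polynomial of degree at most $n-1$ via division with remainder, invoke the Weil bound $|\alpha_i|\le\sqrt{q}$ on its inverse roots, read off the elementary-symmetric estimate, and then use $L(1,\chi)=0$ under $\chi(\f^*)=1$ to drop the degree by one for the partial-sum bound. Your write-up is in fact more explicit than the paper's (you actually compute $L(1,\chi)=0$ rather than citing it); the only slip is in your aside about imprimitive characters, where the extra Euler factors should be written with the primitive inducing character $\chi^*$ rather than $\chi$ itself (since $\chi(P)=0$ for $P\mid f$), but this does not affect the argument.
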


 \begin{proof}
The Dirichlet L-function of $\chi$ is
\begin{align*}
 L(\chi, t)&=\sum_{g\in\f[x], \text{monic}} \chi(g)t^{\deg(g)}\\
 &= \sum_{k=0}^{\infty} M_k(\chi) t^k\in 1+t\mathbb{C}[[t]].
 \end{align*}
  If $k\geq n$, for monic $g$ of degree $k$, we can write uniquely $g=g_1 f+h$, where $g_1$ is monic in $\f[x]$, $\deg(g_1)= k-n$ and $h\in \f[x], \deg(h)\leq n-1$.
  Thus in this case,
  \begin{align*}
M_k(\chi)&=\sum_{g_1\in\f[x], \text{monic}, \deg(g_1)=k-n}~~\sum_{\deg(h)\leq n-1} \chi(h)\\
 &=q^{k-n}\sum_{h\in \f[x]/(f(x))}\chi(h)\\
 &=0.
 \end{align*}
 This implies
 \begin{align*}
 L(\chi, t)&=\prod_{i=1}^r(1-\rho_i t)
 \end{align*}
 is a polynomial of degree $\leq n-1$, i.e., $r\leq n-1$.
 By the Weil bound (\cite{Wan} Theorem 2.1),
 $$|\rho_i|\leq \sqrt{q}.$$
 It follows that for $ 0\leq k \leq n-1$,
 \begin{align}\label{5.1}\left|M_k(\chi)\right|\leq {r \choose k} \sqrt{q}^k\leq {n-1 \choose k} \sqrt{q}^k.
 \end{align}

Now, note that $\sum_{g\in\f[x], \text{monic}, \deg(g)\leq k} \chi(g)$ is the coefficient
     of $T^k$ in $L(\chi, T)/(1-T)$. Let now $\chi$ be a non-trivial character but trivial on
     $\f^*$. Then $L(\chi, T)$ has the trivial factor $(1-T)$ since
     $L(\chi, 1)=0$. This means that $L(\chi, T)/(1-T)$ is a polynomial of degree $n-2$ \cite{Wan}.
Then by (\ref{5.1}) one has
 $$\left| \sum_{g\in\f[x], \text{monic}, \deg(g)\leq k} \chi(g)\right|\leq  {n-2 \choose k} \sqrt{q}^k.$$

%
%

 \end{proof}

\section{Proof of Lemma \ref{lem1.1} }
In this section we will prove Lemma \ref{lem1.1}.
 For convenicnec we state it again as the following independent theorem.
 \begin{thm}\label{thm6.1}
 Let $f(x) \in \f [x]$ be a monic polynomial of degree $d=k+m\leq q-1$. Let $M(f, r)$  denote
the number of pairs $(D_r, g(x))$ with $D_r$ being a $r$-subset in $\f$ and $g(x)\in \f[x]$ of degree at most $k-1$ satisfying
$$ (f(x)+g(x))|_{D_r} \equiv 0.$$ Then for $k+1\leq r\leq d$,  we have
 \begin{align*}
\left| {M(f, r)}-{{q \choose r}q^{k-r}}\right| \leq  {\frac {q}p+m\sqrt{q}+r \choose r}{m-1 \choose d-r} \sqrt{q}^{d-r}.
 \end{align*}
\end{thm}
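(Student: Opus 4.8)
The plan is to convert the divisibility condition defining $M(f,r)$ into a multiplicative congruence modulo a polynomial of degree $m+1$, to which the character sum bound of Section 4 and the sieve of Section 3 can be applied. First I would record the elementary reduction: for an $r$-subset $D\subset\f$, write $h_D(x)=\prod_{c\in D}(x-c)$; since $g$ has degree at most $k-1<r$, a pair $(D,g)$ is counted exactly when $g$ equals the remainder of $-f$ modulo $h_D$ and this remainder has degree $\le k-1$. Hence $M(f,r)=\#\{D:\de(f\bmod h_D)\le k-1\}$, and for each such $D$ the quotient $Q=\lfloor f/h_D\rfloor$ is monic of degree $d-r=k+m-r\le m-1$. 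Reversing all polynomials ($x\mapsto 1/x$ and clearing denominators) turns ``$f\equiv g\pmod{h_D}$ with $\de(g)\le k-1$'' into $h_D^*Q^*\equiv f^*\pmod{x^{m+1}}$, where $h_D^*=\prod_{c\in D}(1-cx)$ and all three reversed polynomials are principal units in $R=\f[x]/(x^{m+1})$. This gives
\[ M(f,r)=\sum_{Q}\#\{D: h_D^*\equiv f^*(Q^*)^{-1}\pmod{x^{m+1}}\}, \]
with $Q$ running over monic polynomials of degree $d-r$, and one checks each admissible $D$ is counted exactly once (with its unique $Q$).

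Next I would detect the congruence using the dual of the group of principal units $U_1=1+xR\subset R^*$, of order $q^m$. Writing the indicator of $h_D^*\equiv\gamma_Q$ as $q^{-m}\sum_{\chi}\overline{\chi(\gamma_Q)}\chi(h_D^*)$ and separating the two variables yields
\[ M(f,r)=\frac{1}{q^m}\sum_{\chi}\overline{\chi(f^*)}\Big(\sum_{Q}\chi(Q^*)\Big)\Big(\sum_{D}\chi(h_D^*)\Big). \]
The trivial character contributes the main term $q^{-m}\cdot q^{d-r}\cdot\binom{q}{r}=\binom{q}{r}q^{k-r}$, so the whole error is the contribution of the $q^m-1$ nontrivial $\chi\in\widehat{U_1}$, and it remains to bound the two inner sums uniformly in $\chi$.

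For the $Q$-sum, reversal identifies $\sum_{Q}\chi(Q^*)$ with a partial character sum $\sum_{\de(S)\le d-r}\chi^*(S)$ over monic $S$, where $\chi^*$ is nontrivial and trivial on $\f^*$; Lemma \ref{weil} (applied with modulus of degree $m+1$, via the symmetry $x\mapsto 1/x$) then gives $|\sum_Q\chi(Q^*)|\le\binom{m-1}{d-r}\sqrt{q}^{\,d-r}$. For the $D$-sum, $\sum_D\chi(h_D^*)=e_r(\{\chi(1-cx)\}_{c\in\f})$ is the $r$-th elementary symmetric function of the values $\chi(1-cx)$, and the distinct-coordinate sieve (Proposition \ref{thm1.1}) expresses it through the power sums $p_\ell=\sum_{c\in\f}\chi(1-cx)^\ell=\sum_c\chi^\ell(1-cx)$. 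A short computation gives $p_\ell=1+M_1(\chi^\ell)$, so $p_\ell=q$ when $\ell$ is a multiple of $P:=\order(\chi)$ and $|p_\ell|\le 1+m\sqrt{q}$ otherwise, the latter again by Lemma \ref{weil}. Feeding these into the generating identity
\[ \sum_{r\ge0} e_r t^r=\prod_c\big(1+\chi(1-cx)t\big)=(1-(-1)^Pt^P)^{q/P}\exp\!\Big(\sum_{P\nmid\ell}\tfrac{(-1)^{\ell-1}}{\ell}p_\ell t^\ell\Big) \]
and majorizing the exponential factor by $(1-t)^{-(1+m\sqrt q)}$ produces $|e_r|\le\binom{q/P+m\sqrt q+r}{r}\le\binom{q/p+m\sqrt q+r}{r}$, using $P\ge p$.

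Multiplying the two bounds, summing over the $q^m-1$ nontrivial characters and dividing by $q^m$ gives exactly the asserted estimate. The main obstacle is the $D$-sum: a naive triangle inequality on the sieve formula destroys the sign cancellation (the terms with $p_\ell=q$ would blow the bound up to $\binom{q}{r}$), so the crux is to retain the cancellation by passing to the generating function and exploiting that each $\chi(1-cx)$ is a $P$-th root of unity with $P\ge p$; this is precisely what replaces $\binom{q}{r}$ by $\binom{q/p+m\sqrt q+r}{r}$, and explains the appearance of $q/p$. A secondary technical point is the careful reversal bookkeeping needed to realize both inner sums as honest (partial) character sums over a residue ring of degree $m+1$, so that Lemma \ref{weil} applies with the sharper $\binom{m-1}{d-r}$ rather than $\binom{m}{d-r}$.
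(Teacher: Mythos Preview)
Your approach is the paper's: reverse to work modulo $x^{m+1}$, expand by characters of the principal units $U_1$, and bound the two resulting factors via Lemma~\ref{weil} and a power-sum/symmetric-function estimate. You streamline in two places --- you avoid the paper's split into $0\in D$ versus $D\subset\f^*$ (handled there through Lemmas~\ref{lem3.5} and~\ref{lem3.6} and then recombined), and the combinatorial bound that the paper imports from \cite{LW3} you make explicit via the coefficient-wise majorization $(1\pm t^P)^{q/P}\preceq(1-t)^{-q/P}$; note this last step needs $q/P\in\mathbb{Z}_{\ge1}$, which does hold here since $m+1\le q$ forces the exponent of the $p$-group $U_1$ to divide $q$.
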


We first establish two lemmas which
allows us to compute $M(f,r)$ through the method of character sums defined over a residue polynomial ring.

 For $k\geq 0$, let $P_k$ denote the set
of all polynomials $h(x) \in \f [x]$ of degree at most $k$ with $h(0)=1$.
Let $N_2$ be defined by
\begin{align*}
N_2=&\#\{((x_1,  \dots, x_r), h) \in \f^r \times P_{d-r} \big | (1-x_1x)\cdots(1-x_r x)h(x)\equiv f(x)(\bmod \ x^{m+1})\},
 \end{align*}
 where we require that the $x_i$'s are distinct.
 Let $\chi$ be a character from $(\f[x]/(x^{m+1}))^*$ to $\mathbb{C}^*$.
 We extend this definition to $\f[x]/(x^{m+1})$ by defining $\chi(g)=0$ for $(g, x^{m+1})\ne 1$.
 Let $G$ denote the group of all characters $\chi$ such that $\chi(\f^*)=1$. This is an abelian group of order $|G|=q^m$.
For any real number $x$ and a positive integer $r$, define $(x)_r=x(x-1)\cdots(x-r+1)$ and let $(x)_0=1$.

 \begin{lem}\label{lem3.5} Assume that $f(0)=1$.
Then       \begin{align*}
\left| N_2-(q)_r q^{k-r} \right| \leq  (\frac {q}p+m\sqrt{q}+r-1)_r{m-1 \choose d-r} \sqrt{q}^{d-r}.
 \end{align*}
 \end{lem}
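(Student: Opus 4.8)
The plan is to set up a character-sum expression for $N_2$ using the distinct-coordinate sieving formula from Proposition \ref{thm1.1}, and then extract a main term together with an error bounded by Lemma \ref{weil}. First I would reformulate the congruence condition via characters. Since $f(0)=1$ and each factor $(1-x_ix)$ has constant term $1$, the product $(1-x_1x)\cdots(1-x_rx)h(x)$ is congruent to $f(x)$ modulo $x^{m+1}$ precisely when it equals $f(x)$ in the ring $R=\f[x]/(x^{m+1})$; note that elements of $R$ with constant term $1$ are units, so the map $h \mapsto (1-x_1x)\cdots(1-x_rx)h$ sends $P_{d-r}$ into units of $R$. The idea is to detect the equation $u(x)\equiv f(x)\pmod{x^{m+1}}$, where $u=\prod_i(1-x_ix)h$, by averaging a character over the group $G$ of characters trivial on $\f^*$. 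Because both sides are units with constant term $1$ and $|G|=q^m$ equals the number of such units, orthogonality gives an indicator of the form $\frac{1}{q^m}\sum_{\chi\in G}\chi\bigl(u f^{-1}\bigr)$.

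Next I would substitute this orthogonality relation into the definition of $N_2$ and interchange the order of summation, putting the sum over the $\chi\in G$ outside. The trivial character $\chi_0$ contributes the main term: for $\chi_0$ the indicator is $1$ whenever $u$ is a unit, so the $\chi_0$-contribution counts pairs $((x_1,\dots,x_r),h)$ with distinct $x_i$ and $h\in P_{d-r}$ freely, up to the constraint coming from matching $f$ in $R$; this should recover $(q)_r q^{k-r}$, where $(q)_r$ counts the ordered distinct $r$-tuples and $q^{k-r}$ accounts for the free choices of the higher coefficients of $h$ (degrees above $m$, which are unconstrained by the mod-$x^{m+1}$ condition). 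For each nontrivial $\chi\in G$, the inner sum factors as a product over the coordinates $x_i$ of a character sum of the form $\sum_{x\in\f}\chi(1-xX)$ times a sum over $h\in P_{d-r}$ of $\chi(h)$. The point is that the distinctness of the $x_i$ is handled by Proposition \ref{thm1.1}, which rewrites the distinct-coordinate sum as a signed sum over conjugacy classes of $S_r$ of symmetrized products; each cycle of length $a$ in a permutation $\tau$ collapses to a single character sum $\sum_{x\in\f}\chi\bigl((1-xX)^a\bigr)$, and these are governed by the Weil bound.

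The error term is then $\frac{1}{q^m}$ times a sum over the nontrivial $\chi\in G$ of products of such character sums. Here I would invoke Lemma \ref{weil} with $n=m+1$: each factor $\sum_{h}\chi(h)$ over $h\in P_{d-r}$ of degree at most $d-r$ is bounded by a binomial coefficient times a power of $\sqrt q$, and since $\chi$ is trivial on $\f^*$ the sharper bound with $\binom{n-2}{k}=\binom{m-1}{d-r}$ applies, which is exactly where the $\binom{m-1}{d-r}$ and $\sqrt q^{\,d-r}$ in the target estimate come from. The coordinate character sums $\sum_{x\in\f}\chi(1-xX)$ are each of size at most $q/p + m\sqrt q$ in absolute value (the $q/p$ accounting for the zero-of-character values and the $m\sqrt q$ from Weil), and taking the product over $r$ coordinates while summing over the $q^m-1$ nontrivial characters, after cancellation against the $1/q^m$, produces the falling-factorial factor $(\frac{q}{p}+m\sqrt q+r-1)_r$. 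Collecting these gives the claimed bound.

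The hard part will be the bookkeeping that turns the signed sum over conjugacy classes of $S_r$ — with its cycle-type-dependent collapsed character sums — into the clean falling-factorial bound $(\frac qp + m\sqrt q + r-1)_r$, rather than a messier expression; in particular, one must bound each collapsed cycle sum $\sum_{x}\chi((1-xX)^a)$ uniformly by $\frac qp + m\sqrt q$ and verify that the symmetric-function combinatorics assemble into exactly the falling factorial. I would expect the main term identification (isolating $\chi_0$ and checking it yields precisely $(q)_r q^{k-r}$) to also require care, since one must confirm that the distinctness sieve applied to the trivial character reproduces the ordered count $(q)_r$ and that the free high-degree coefficients of $h$ contribute the factor $q^{k-r}$.
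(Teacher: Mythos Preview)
Your overall architecture matches the paper's proof: orthogonality over the character group $G$, separation of the trivial character to extract the main term $(q)_r q^{k-r}$, application of the distinct-coordinate sieve (Proposition \ref{thm1.1}) to the $x_i$-sum, and the second part of Lemma \ref{weil} to bound $\sum_{h\in P_{d-r}}\chi(h)$ by $\binom{m-1}{d-r}\sqrt{q}^{\,d-r}$. That much is correct and essentially identical to the paper.

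The gap is in your treatment of the cycle character sums. You assert that each collapsed sum $\sum_{a\in\f}\chi^i(1-ax)$ can be bounded \emph{uniformly} by $\tfrac{q}{p}+m\sqrt{q}$, with ``the $q/p$ accounting for the zero-of-character values.'' This is false on both counts. First, $1-ax$ is always a unit modulo $x^{m+1}$, so no character values vanish. Second, when $\chi^i=1$ the sum equals $q$, which in general exceeds $\tfrac{q}{p}+m\sqrt{q}$; when $\chi^i\neq 1$ the Weil bound gives only $m\sqrt{q}$ (or the slightly sharper $(m-1)\sqrt{q}$). So no single uniform per-cycle bound of that shape is available, and simply feeding such a bound into the Stirling-number identity $\sum_{\tau}A^{\ell(\tau)}=(A+r-1)_r$ does not yield the claimed falling factorial.

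What the paper does instead is split the nontrivial $\chi\in G$ according to whether $\chi^i=1$ for some $2\le i\le r$. Since $G$ is a $p$-group, $\chi^i=1$ forces $p\mid i$; this is the true source of the $q/p$. One obtains two weighted estimates, one with $A=(m-1)\sqrt{q}$ (for $\chi$ with no small-order power trivial) and one with $A=\tfrac{q}{p}+(m-1)\sqrt{q}$ (for the remaining $\chi$), and the combinatorics assembling these into the single bound $(\tfrac{q}{p}+m\sqrt{q}+r-1)_r$ is nontrivial and is taken from \cite{LW3}. Your sketch would need to incorporate this dichotomy rather than a uniform per-cycle bound.
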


 \begin{proof}
 \begin{align*}
N_2&=\frac 1 {q^m}\sum_{(x_1,  \dots, x_r)\in \f^r, x_i \ne x_j}\sum_{h\in P_{d-r}}\sum_{\chi\in G }\chi( {(1-x_1x)\cdots(1-x_r x)h(x)}/{f(x)})\\
&=(q)_r{q^{k-r}}+\frac {1} {q^m}\sum_{1\ne \chi\in G }\chi^{-1}(f(x))W(\chi),
 \end{align*}
where $$W(\chi)=\sum_{(x_1,  \dots, x_r)\in \f^r, x_i \ne x_j}\sum_{h\in P_{d-r}}\chi( {(1-x_1x)\cdots(1-x_r x)h(x)}).$$

For each character $\chi$, the function $\chi((1-x_1x)\cdots(1-x_r x))$ is clearly symmetric in the $x_i$'s.
Recall that for a permutation $\tau=(i_1i_2\cdots i_{a_1})
  (j_1j_2\cdots j_{a_2})\cdots(l_1l_2\cdots l_{a_s})$  in the symmetric group $S_r$ with $1\leq a_i, 1 \leq i\leq s$,
  the subset $X_{\tau}$ of $X= \f^r$ is defined as
  \begin{align}
     X_{\tau}=\left\{
(x_1,\dots,x_r)\in \f^r,
 x_{i_1}=\cdots=x_{i_{a_1}},\ldots, x_{l_1}=\cdots=x_{l_{a_s}}
 \right\}.
\end{align}
Then a complex function $F_{\tau}(\chi)$ is defined:
$$F_{\tau}(\chi)=\sum_{(x_1,\ldots, x_r) \in X_{\tau} } \sum_{h\in P_{d-r}}\chi( {(1-x_1x)\cdots(1-x_r x)h(x)}).$$
Thus by the sieving formula (\ref{1.5}), one has
  \begin{align*}
N_2&={(q)_r q^{k-r}} +\frac {1} {q^m}\sum_{1\ne \chi\in G }\chi^{-1}(f(x))\sum_{\tau\in S_r}{\sign(\tau)F_{\tau}(\chi)}.
 \end{align*}
  Thus it suffices to estimate $F_{\tau}(\chi)$ for non-trivial $\chi$, where
$$
   F_{\tau}(\chi)=\left(\sum_{(x_1,\ldots, x_r) \in X_{\tau}} \prod_{i=1}^r\chi(1-x_ix)\right)\cdot \left(\sum_{h\in P_{d-r}}\chi(h(x))\right).
$$
We first estimate the second factor. Since $\chi$ is non-trivial, $\chi(\f^*)=1$ and $\chi(x)=0$, by Lemma \ref{weil}
we deduce
$$|\sum_{h\in P_{d-r}}\chi(h(x))| = |\sum_{h\in\f[x], \text{monic}, \deg(h)\leq d-r} \chi(h(x))| \leq {m-1\choose d-r}\sqrt{q}^{d-r}.$$
To estimate the first factor, we suppose $\tau$ is of type $(c_1,c_2,\ldots,c_{r})$, where $c_i$ is the number of $i$-cycles in $\tau$ for $1 \leq i\leq r$. Then the first
factor of $F_{\tau}(\chi)$ is
   \begin{align*}
G_{\tau}(\chi)&=\sum_{(x_1,\ldots, x_r) \in X_{\tau}} \prod_{i=1}^r\chi(1-x_ix)\\
&=(\sum_{a\in{{\bf{F}}_q}}\chi(1+ax))^{c_1}
(\sum_{a\in{{\bf{F}}_q}}\chi^2(1+ax))^{c_2} \cdots
(\sum_{a\in{{\bf{F}}_q}}\chi^{r}(1+ax))^{c_{r}}\\
&=\prod_{i=1}^{r}(\sum_{a\in{{\bf{F}}_q}}\chi^i(1+ax))^{c_i}.
 \end{align*}
Define $ m_i(\chi)=1$ if $\chi^i=1$ and  $ m_i(\chi)=0$ if $\chi^i\neq1$.
By the Weil bound (see \cite{Wan} Theorem 2.1) we deduce that
$$|G_{\tau}| \leq  q^{\sum_{i=1}^{r}c_im_i(\chi)}(m\sqrt{q})^{\sum_{i=1}^{r}
c_i(1-m_i(\chi))}.$$
Since $X=\f^r$ is symmetric, by (\ref{7}) we have
   \begin{align*}
  N_2-{(q)_r q^{k-r}}&=\frac {1} {q^m}\sum_{1\ne \chi\in G }\chi^{-1}(f(x))\sum_{\tau\in S_r}{\sign(\tau)F_{\tau}(\chi)}\\
  &=\frac {1} {q^m}\sum_{1\ne \chi\in G }\chi^{-1}(f(x))\sum_{\tau\in C_{r}}\sign(\tau)  C(\tau) F_{\tau}(\chi)\\
&=\frac{1}{q^m} \sum_{\chi^d \neq 1, \forall 2\leq d\leq r}\chi^{-1}(f(x))
\sum_{\tau\in C_{r}}\sign(\tau)C(\tau) F_{\tau}(\chi)\\
+\frac{1}{q^m}&\sum_{\chi\neq 1, \chi^d=1, \text{ for some}\  2\leq d\leq r
}\chi^{-1}(f(x)) \sum_{\tau\in C_{r}}\sign(\tau)C(\tau)
F_{\tau}(\chi).
 \end{align*}
 Let $S=\#\{\chi \in G\ |\  \chi\ne1, \chi^d = 1~ \text{for some}\  2\leq d\leq r \}$.
 The last two terms were estimated by a combinatorial counting argument (see \cite{LW3} page 2361).
 We thus obtain
 $$\left|  N_2-{(q)_r q^{k-r}}\right|\leq  w(S){m-1 \choose d-r} \sqrt{q}^{d-r},$$
 where $$w(S)=\left(\frac{q^m-S}{q^m}((m-1)\sqrt{q}+r-1)_{r}
+\frac {S} {q^m}\cdot(\frac {q}p+(m-1)\sqrt{q}+r-1)_{r}\right).$$
If $S$ is 0, we have the stronger estimate
   $$\left|  N_2-{(q)_r q^{k-r}}\right|\leq ((m-1)\sqrt{q}+r-1)_{r}{m-1 \choose d-r} \sqrt{q}^{d-r}. $$
 In general, we have the weaker estimate
 $$\left|  N_2-{(q)_r q^{k-r}}\right|\leq (\frac {q}p+(m-1)\sqrt{q}+r-1)_{r}{m-1 \choose d-r} \sqrt{q}^{d-r}. $$ \end{proof}

Similarly, if we consider the counting problem in $\f^*$, then we will
have a slightly different formula.
 \begin{lem}\label{lem3.6}  Let $N_2^*$ be defined by
\begin{align*}
N_2^*=&\#\{((x_1,  \dots, x_r), h)\in {\f^*}^r\times P_{d-r} \big | (1-x_1x)\cdots(1-x_r x)h(x)\equiv f(x)(\bmod \ x^{m+1})\},
 \end{align*}
 where we require that the $x_i$'s are distinct.
 Then for $f(0)=1$, we have      \begin{align*}
\left| N_2^*-(q-1)_rq^{k-r} \right| \leq  ((q-1)/p+m\sqrt{q}+r-1)_r{m-1 \choose d-r} \sqrt{q}^{d-r}.
 \end{align*}
 \end{lem}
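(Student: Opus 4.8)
The plan is to run the character-sum and distinct-coordinate sieving argument of Lemma \ref{lem3.5} essentially verbatim, with $\f^*$ in place of $\f$ as the range of the coordinates $x_1,\dots,x_r$, and then to isolate the two places where the constraint $x_i\neq 0$ changes the estimates. First I would expand $N_2^*$ by orthogonality over the group $G$ of order $q^m$. Because both $(1-x_1x)\cdots(1-x_rx)h(x)$ and $f(x)$ have constant term $1$, the scalar ambiguity detected by $G$ is forced to be trivial, so the trivial character contributes the main term $\frac{1}{q^m}(q-1)_r\,|P_{d-r}| = (q-1)_r q^{k-r}$, using $|P_{d-r}| = q^{d-r}$ and $d=k+m$. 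This leaves
$$N_2^* - (q-1)_r q^{k-r} = \frac{1}{q^m}\sum_{1\neq\chi\in G}\chi^{-1}(f(x))\,W^*(\chi),$$
where $W^*(\chi)$ is the same double sum as in Lemma \ref{lem3.5} but with the $x_i$ restricted to $\f^*$.

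Next I would apply the sieving formula $(\ref{7})$, which is legitimate since $(\f^*)^r$ is still symmetric under $S_r$, and factor each $F_\tau^*(\chi)$ into an $h$-factor times a cycle-factor $\prod_i\bigl(\sum_{a\in\f^*}\chi^i(1+ax)\bigr)^{c_i}$. The $h$-factor is identical to before, so Lemma \ref{weil} again bounds it by $\binom{m-1}{d-r}\sqrt{q}^{\,d-r}$. The only genuinely new ingredient is the punctured cycle sum, which I would relate to the full one by $\sum_{a\in\f^*}\chi^i(1+ax) = \sum_{a\in\f}\chi^i(1+ax) - 1$, since the omitted term at $a=0$ equals $\chi^i(1)=1$. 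Thus when $\chi^i=1$ the cycle sum drops from $q$ to $q-1$, and when $\chi^i\neq1$ its modulus gains an additive $1$ over the Weil bound used in Lemma \ref{lem3.5}. The structural fact that $G$ is an abelian $p$-group, so that a nontrivial $\chi$ can satisfy $\chi^i=1$ only when $p\mid i$, is inherited unchanged, and it is precisely this that the combinatorial counting of \cite{LW3} converts into a factor $(q-1)/p$ (rather than $q-1$) in the base of the resulting falling factorial.

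Feeding these two substitutions into the same signed sum over conjugacy classes as in Lemma \ref{lem3.5} then yields $|N_2^* - (q-1)_r q^{k-r}| \leq ((q-1)/p + m\sqrt{q} + r - 1)_r\binom{m-1}{d-r}\sqrt{q}^{\,d-r}$. I expect the main difficulty to be purely bookkeeping rather than conceptual: one must verify that the additive $1$ picked up by each nontrivial cycle sum, together with the downgrade $q\mapsto q-1$ of each trivial cycle sum, propagates through the alternating sum so as to land inside the single falling-factorial base $(q-1)/p + m\sqrt{q} + r - 1$. This should go through because the assembly in Lemma \ref{lem3.5} already carries slack between the crude per-cycle bound $m\sqrt{q}$ and the sharper base it produces, and that slack is enough to absorb the extra $+1$ terms without raising the power of $\sqrt{q}$ in the error.
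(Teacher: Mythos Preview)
Your proposal is correct and is exactly the approach the paper intends: the paper gives no separate proof of Lemma~\ref{lem3.6}, stating only ``Similarly, if we consider the counting problem in $\f^*$, then we will have a slightly different formula,'' and your fill-in---rerunning the orthogonality/sieving argument with the two substitutions $q\mapsto q-1$ on trivial cycle sums and the corresponding $q/p\mapsto (q-1)/p$ in the combinatorial assembly from \cite{LW3}---is precisely that similar argument. One small simplification: you need not worry about the extra $+1$ on the nontrivial cycle sums, since for $a\in\f^*$ and $\chi$ trivial on $\f^*$ one has $\chi^i(1+ax)=\chi^i(x+a^{-1})$, so $\sum_{a\in\f^*}\chi^i(1+ax)=\sum_{c\in\f^*}\chi^i(x+c)=M_1(\chi^i)$ directly (the missing $c=0$ term $\chi^i(x)$ vanishes), and Lemma~\ref{weil} bounds this by $m\sqrt{q}$ without any detour; but your slack argument is equally valid.
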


Now, we assume that $f(x)\in \f[x]$ is a monic polynomial of degree $d$.
Suppose the top $s$ coefficients of $f$ are $\alpha=(a_{d-1}, \ldots, a_{k})$, i.e., $$f^{\alpha}(x)=x^d+a_{d-1}
x^{d-1}+\cdots+a_{k}x^{k}+\cdots.$$
For integer $k\geq 0$, let $\f[x]_k$ denote the set of polynomials $g\in \f[x]$ of degree at most $k$.

\begin{proof}[Proof of Theorem \ref{thm6.1}]
Note that $M(f,r)$ equals the number of
pairs $(D_r, g(x))$, where $D_r=(x_1,\ldots, x_r)$ is an $r$-subset of $\f$ and $g \in \f[x]_{k-1}$
such that there is a unique monic $w(x) \in \f$ of degree $d-r$ satisfying
 \begin{align} x^d+a_{d-1}x^{d-1}+\cdots+a_{k}x^{k}+g(x)=(x-x_1)\cdots(x-x_r)w(x). \label{6.2}
  \end{align}
Clearly $M(f,r)=N_r^{\alpha, 1}(d, m)+N_r^{\alpha, 2}(d, m)$, where $N_r^{\alpha, 1}(d,m)$
 equals the number of such pairs  $(D_r, g(x))$ with $D_r \subseteq \f^*$
 and   $N_r^{\alpha, 2}$ equals the number of such pairs $(D_r, g(x))$ with $D_r$ containing $0$.

Suppose $x_1=0$, by dividing $x$ on both sides of (\ref{6.2}), it is easy to check $N_r^{\alpha, 2}(d, m)=N_{r-1}^{\alpha, 1}(d-1, m)$.  It then suffices to count  $N_r^{\alpha, 1}(d, m)$.

Since now we have $x_i\in \f^*$, Substitute $x$ by $1/x$ one has
\[ \frac 1 {x^d}+a_{d-1}\frac 1 {x^{d-1}}+\cdots+a_{k}\frac 1 {x^{k}}+g(\frac 1 x)=(\frac 1 x-x_1)(\frac 1 x -x_2)\cdots(\frac 1 x-x_r)w(\frac 1x).\]
Multiplying $x^d$ on both sides we then have
$$ 1+a_{d-1}x+\cdots+a_{k}x^s+x^d g(\frac 1 x)=(1-x_1 x)(1-x_2 x)\cdots(1-x_r x)x^{d-r}w(\frac 1x ).$$
Note that $h(x)=x^{d-r}w(\frac 1x )$ is a polynomial of degree $\leq d-r$,  $x^{d}g(\frac 1x )$ is a polynomial divisible by $x^{m+1}$ and degree bounded by $d$.
It suffices to count the number of pairs $(D_r, h(x))$, where $D_r=(x_1,\ldots, x_r)$  is an $r$ subset of $\f^*$ and $h(x)\in \f [x] $ of degree $\leq d-r$ such that
$$1+a_{d-1}x+\cdots+a_{k}x^s\equiv(1-x_1 x)(1-x_2 x)\cdots(1-x_r x)h(x) (\bmod \ x^{m+1}).$$
Thus, if we let $N_2^*$ be defined as in  Lemma \ref{lem3.6}, then
\begin{align*}
N_r^{\alpha, 1}(d, m)=\frac {1}{r!}N_2^*.
\end{align*}
It follows that
 \begin{align*}
\left| N_r^{\alpha,1}(d, m)-{q-1 \choose r}q^{k-r} \right| \leq  {\frac {q-1}p+m\sqrt{q}+r-1 \choose r}{m-1 \choose d-r} \sqrt{q}^{d-r}.
 \end{align*}
 Similarly, by the estimate in Lemma \ref{lem3.5} one has
 \begin{align*}
\left| N_r^{\alpha,2}(d, m)-{q-1 \choose r-1}q^{k-r} \right| \leq  {\frac {q-1}p+m\sqrt{q}+r-2 \choose r-1}{m-1 \choose d-r} \sqrt{q}^{d-r}.
 \end{align*}
Finally we conclude
  \begin{align*}
&\left| M(f,r)-\left({q-1 \choose r}+{q-1 \choose r-1}\right)q^{k-r} \right| \\
&\leq   {\frac {q-1}p+m\sqrt{q}+r-1 \choose r}{m-1 \choose d-r} \sqrt{q}^{d-r}+  {\frac {q-1}p+m\sqrt{q}+r-2 \choose r-1}{m-1 \choose d-r} \sqrt{q}^{d-r}\\
&\leq {\frac {q-1}p+m\sqrt{q}+r \choose r}{m-1 \choose d-r} \sqrt{q}^{d-r}.
 \end{align*}
 \end{proof}

{ \bf Remark:}  As we mentioned in the first section,  our main results for standard Reed-Solomon codes extend to primitive Reed-Solomon codes with minor modification. In fact, in this case, things are simpler. What we need to count is exactly $\frac 1 {r!}N_2^*$, which is given in Lemma \ref{lem3.6}.

{\bf Acknowledgements.} The authors wish to thank the anonymous
 referees for their constructive and valuable suggestions.

 \bibliographystyle{plain}
\bibliography{Li-Wan7-CountingPolynomials}

\end{document}